\documentclass[a4paper,10pt,reqno, english]{amsart}

\usepackage{amsmath,amssymb,amscd,amsthm,amsfonts}
\usepackage{graphicx,subfigure}
\usepackage{hyperref}
\usepackage{dsfont}

\usepackage{graphicx}

\graphicspath{{Figures/}}

\newtheorem{theorem}{Theorem}[section]
\newtheorem*{theoremp}{Theorem}
\newtheorem{lemma}[theorem]{Lemma}
\newtheorem{claim}[theorem]{Claim}
\newtheorem{corollary}[theorem]{Corollary}

\newtheorem{remark}[theorem]{Remark}
\newtheorem{conjecture}[theorem]{Conjecture}
\newtheorem*{conjecturep}{Conjecture}

\newtheorem*{question}{Question}
\newtheorem{definition}{Definition}

\newcommand{\addone}[1]{\begin{bmatrix}
	#1 \\ 1
\end{bmatrix}}

\def\rr{\mathds{R}}

\DeclareMathOperator{\conv}{conv}
\DeclareMathOperator{\tr}{tr}

\title{Tolerance for colorful Tverberg partitions}

\author[Sarkar]{Sherry Sarkar}
\address{Georgia Tech, North Ave NW, Atlanta, GA 30332} 
\email{ssarkar44@gatech.edu}

\author[Sober\'on]{Pablo Sober\'on}\address{Baruch College, City University of New York, One Bernard Baruch Way, New York, NY 10010,\hfill\break%
\mbox{\hspace{4mm}}United States} 
\email{pablo.soberon-bravo@baruch.cuny.edu}

\thanks{
This research project was done as
part of the 2019 CUNY Combinatorics REU, supported by NSF awards
DMS-1802059 and DMS-1851420.  Sober\'on's research is also supported by PSC-CUNY grant 62639-00-50.}

\begin{document}

\maketitle

\begin{abstract}
Tverberg's theorem bounds the number of points $\mathbb{R}^d$ needed for the existence of a partition into $r$ parts whose convex hulls intersect.  If the points are colored with $N$ colors, we seek partitions where each part has at most one point of each color.  In this manuscript, we bound the number of color classes needed for the existence of partitions where the convex hulls of the parts intersect even after any set of $t$ colors is removed.  We prove asymptotically optimal bounds for $t$ when $r \le d+1$, improve known bounds when $r>d+1$, and give a geometric characterization for the configurations of points for which $t=N-o(N)$.


\end{abstract}

\section{Introduction}

Given a set of points in $\rr^d$, we can study how the convex hulls of its subsets intersect.  These intersections have rich combinatorial properties, which was made clear by Helge Tverberg with his classic theorem.

\begin{theoremp}[Tverberg 1966 \cite{Tverberg:1966tb}]
	Let $r,d$ be positive integers.  Given a set $X$ of $(r-1)(d+1)+1$ points in $\rr^d$, there is a partition of $X$ into $r$ parts whose convex hulls intersect.
\end{theoremp}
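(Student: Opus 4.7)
The plan is to apply Sarkaria's tensor-product argument, which reduces Tverberg's theorem to a single invocation of B\'ar\'any's colorful Carath\'eodory theorem. Set $n=(r-1)(d+1)+1$ and label the points $x_1,\dots,x_n$. First, lift each point to $\bar{x}_i = (x_i,1) \in \R^{d+1}$ so that its barycentric weight is tracked by the last coordinate. Second, fix auxiliary vectors $v_1,\dots,v_r\in\R^{r-1}$ that sum to zero and such that any $r-1$ of them are linearly independent --- for instance, the vertices of a regular simplex centered at the origin.

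The central construction is to define, for each $i\in[n]$, the color class
\[
S_i = \{\bar{x}_i \otimes v_j : j\in[r]\} \subset \R^{(d+1)(r-1)}.
\]
Since $\sum_j v_j = 0$, we have $\sum_j \bar{x}_i\otimes v_j = 0$, so the origin lies in $\conv(S_i)$ for every $i$. The ambient dimension is $(d+1)(r-1)$ and there are $n = (d+1)(r-1)+1$ color classes, so colorful Carath\'eodory produces a selection $j\colon [n]\to[r]$ with $0 \in \conv\{\bar{x}_i\otimes v_{j(i)} : i\in[n]\}$.

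From this I would extract coefficients $\lambda_i\ge 0$ with $\sum_i\lambda_i=1$ satisfying $\sum_i \lambda_i \bar{x}_i\otimes v_{j(i)} = 0$. Grouping by $k\in[r]$, set $c_k = \sum_{i:\, j(i)=k}\lambda_i \bar{x}_i \in \R^{d+1}$, so the relation reads $\sum_k c_k\otimes v_k = 0$. Substituting $v_r=-\sum_{k<r}v_k$ rewrites this as $\sum_{k<r}(c_k-c_r)\otimes v_k = 0$, and the linear independence of $v_1,\dots,v_{r-1}$ forces $c_1=\cdots=c_r$. The last coordinate of each $c_k$ is $\sum_{i:\, j(i)=k}\lambda_i$; since these common values sum to $1$, each equals $1/r$, so every class $j^{-1}(k)$ is non-empty. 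Dehomogenizing the common vector $c_k$ then yields a point of $\R^d$ that lies in the convex hull of $\{x_i : j(i)=k\}$ for every $k$, which is the desired Tverberg partition.

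The main obstacle is conceptual rather than computational: one has to find the right gadget that packages the $r$-partite conclusion into a single convex-position statement amenable to colorful Carath\'eodory. The tensor product with $v_k$ encodes ``assignment to part $k$,'' and the interplay between $\sum_j v_j = 0$ (which puts the origin in each $\conv S_i$ and activates colorful Carath\'eodory) and the linear independence of any $r-1$ of the $v_j$'s (which forces the $c_k$ to coincide after grouping) is precisely what makes the argument succeed. Once that viewpoint is adopted, the remaining work is the routine linear algebra sketched above.
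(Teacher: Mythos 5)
Your proposal is correct and follows essentially the same route as the paper, namely Sarkaria's tensor-product reduction to B\'ar\'any's colorful Carath\'eodory theorem. The only cosmetic difference is in the final linear-algebra step: the paper extracts the conclusion by pairing against a dual vector $u$ orthogonal to $v_3,\dots,v_r$, while you substitute $v_r=-\sum_{k<r}v_k$ and invoke linear independence to force $c_1=\dots=c_r$ in one stroke, which is a slightly cleaner way to finish the same computation.
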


Tverberg's theorem is a perfect example of a result at the crossroads of combinatorics, topology, and linear algebra.  Its variations and extensions have provided many fruitful directions of research \cite{Barany:2016vx, Barany:2018fya, DeLoera:2019jb}.  A partition of a set of points where the convex hulls of the parts intersect is called a \textit{Tverberg partition}.  In this manuscript, we focus on how two variations of Tverberg's theorem interact with each other: the colorful version and the version with tolerance.

The colorful version of Tverberg's theorem, conjectured by Imre B\'ar\'any and David Larman, consists of adding combinatorial restrictions to the partitions involved in Tverberg's theorem.  The set of points is divided into color classes.  A partition for which each part has exactly one point of each color is called \textit{colorful}.

\begin{conjecturep}[B\'ar\'any, Larman 1992 \cite{Barany:1992tx}]
	Let $r,d$ be positive integers.  Given $d+1$ color classes $X_1, \ldots, X_{d+1}$ of $r$ points each in $\rr^d$, there exists a colorful partition of their union into $r$ sets whose convex hulls intersect
\end{conjecturep}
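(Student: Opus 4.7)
The plan is to attack the conjecture through the configuration-space / test-map (CS/TM) paradigm, which is the standard topological strategy for Tverberg-type statements. The combinatorial claim is recast as the nonexistence of a continuous equivariant map between two spaces, which one then aims to exclude via an equivariant Borsuk--Ulam-type theorem. I will describe the setup and flag the step at which the general-$r$ case resists all known methods.

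First, build the configuration space. Treating each color class $X_i$ as a discrete $r$-point set, form the join $Y = X_1 * X_2 * \cdots * X_{d+1}$, whose points are weighted rainbow selections of at most one point per color. Then take the $r$-fold $2$-wise deleted join $K := Y^{*r}_{\Delta(2)}$: a point of $K$ is an ordered $r$-tuple of rainbow selections with pairwise disjoint supports, which, after normalizing weights, encodes a colorful partition of $\bigcup_i X_i$ into $r$ parts together with a convex combination on each part. The symmetric group $S_r$ acts on $K$ by permuting the $r$ slots. Using the coordinates of the points in $\rr^d$, define the test map
\[
 f : K \longrightarrow (W_r)^{\oplus d}, \qquad W_r = \bigl\{a \in \rr^r : \textstyle\sum_k a_k = 0\bigr\},
\]
which sends a point of $K$ to the coordinatewise deviations of the $r$ part-barycenters from their common mean. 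The map $f$ is $S_r$-equivariant when $S_r$ acts on $W_r$ by coordinate permutation, and a zero of $f$ is precisely a colorful Tverberg partition.

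The remaining step is to rule out an $S_r$-equivariant map $K \to (W_r)^{\oplus d} \setminus \{0\}$. For $r$ a prime $p$, restrict to a cyclic subgroup $\Z/p \leq S_r$ on which the induced action on $K$ is free, and obstruct the map via a mod-$p$ equivariant cohomology computation in the spirit of B\'ar\'any--Shlosman--Sz\H{u}cs and Blagojevi\'c--Matschke--Ziegler, matching the connectivity of $K$ against the real dimension of the target. This is the main obstacle: for composite $r$ the action is no longer $p$-free for any prime, \"Ozaydin's theorem produces an $S_r$-equivariant map into the target sphere, and so the CS/TM scheme in its raw form cannot finish the argument. Closing this gap---by enlarging $K$ with an additional constraint in the style of Blagojevi\'c--Frick--Ziegler, by exhibiting secondary obstructions beyond mod-$p$ cohomology, or by a genuinely combinatorial induction on the colors---is exactly where my proposal would stall, and where essentially all of the existing partial progress on the conjecture has been concentrated.
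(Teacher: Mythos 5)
You are being asked about a statement the paper explicitly labels a \emph{conjecture}: the B\'ar\'any--Larman conjecture remains open in general, the paper quotes it only for context, and there is no proof in the paper to compare your argument against. Your write-up correctly recognizes this and is really a survey of the standard configuration-space/test-map attack together with an explanation of why it stalls, not a proof. On the details: the cases actually settled in the literature are $d\le 2$ and $r=2$ (B\'ar\'any--Larman and Lov\'asz) and $r+1$ prime (Blagojevi\'c--Matschke--Ziegler). Your sketch of restricting to a $\Z/p\le S_r$ subgroup with $p=r$ prime is the older {\v Z}ivaljevi\'c--Vre\'cica route, which at $r$ prime needs color classes larger than $r$; the sharp result for exactly $r$ points per class is the BMZ theorem for $r+1$ prime, which proceeds by a different reduction rather than the direct mod-$r$ obstruction you describe. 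The blockage you name --- that {\"O}zaydin's theorem produces an $S_r$-equivariant map into the target sphere for composite $r$, so the primary equivariant-cohomology obstruction vanishes --- is indeed the recognized reason the conjecture is open. Since you concede the argument terminates before reaching the conclusion, this should be presented as a report on the state of the problem rather than as a proof proposal.
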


 The conjecture has been proved for $d\leq2$ and any $r$ by B\'ar\'any and Larman \cite{Barany:1992tx}, and they showed a proof by Lov\'asz for $r=2$ and any $d$.  Blagojevi\'c, Matschke and Ziegler confirmed the conjecture when $r+1$ is a prime number \cite{Blagojevic:2011vh, Blagojevic:2015wya}, and they proved an optimal version of the B\'ar\'any-Larman conjecture. In several variations of this conjecture, the number and the size of the color classes vary \cite{Zivaljevic:1992vo, Soberon:2015cl, Blagojevic:2014js}. 

The versions with tolerance prove the existence of Tverberg partitions that resist the removal of any sufficiently small set of points.  This line of research started with a result by Larman, who showed that \textit{Given $2d+3$ points in $\rr^d$, there is a partition of them into two sets $A, B$ such that for any point $x$, $\conv(A\setminus \{x\})\cap \conv(B\setminus\{x\}) \neq \emptyset$} \cite{Larman:1972tn}.  The asymptotic behavior as the number of points removed grows to infinity was settled recently.

\begin{theoremp}[Garc\'ia-Col\'in, Raggi, Rold\'an-Pensado 2017 \cite{GarciaColin:2017id}]
	Let $r,d$ be fixed positive integers and let $N$ be a positive integer.  Then, there exists a value $t=N/r-o(N)$ such that for any $N$ points in $\rr^d$ there is a partition of them into $r$ sets that remains a Tverberg partition even if any $t$ points are removed.
\end{theoremp}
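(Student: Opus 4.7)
The plan is to construct the tolerant partition by aggregating many small Tverberg partitions, each contributing a ``witness'' point to the global common intersection, in such a way that each witness survives unless many points are removed.

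Fix a large constant $L=L(r,d)$ and split $X$ into $m=\lfloor N/(rL)\rfloor$ disjoint groups $G_1,\ldots,G_m$ of $rL$ points each, setting aside the $O(1)$ leftover points. On each group I would invoke a balanced Tverberg-type theorem to obtain a partition $G_j=S_{1,j}\sqcup\cdots\sqcup S_{r,j}$ with $|S_{i,j}|\geq L$ for every $i$ and a common point $p_j\in\bigcap_{i=1}^{r}\conv(S_{i,j})$. Set $T_i:=\bigcup_{j=1}^{m} S_{i,j}$ and append the leftover points arbitrarily to any $T_i$. Since every $p_j$ lies in every $\conv(T_i)$, the partition $T_1,\ldots,T_r$ is Tverberg.

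For the tolerance analysis, call group $j$ \emph{destroyed} by a removal set $R$ if $S_{i,j}\subseteq R$ for some $i$. If group $j$ survives, then $p_j\in\conv(T_i\setminus R)$ for every $i$, so $T_1,\ldots,T_r$ remains Tverberg after the removal. Destroying a single group costs at least $L$ removed points, so a set $R$ with $|R|<mL$ can destroy at most $\lfloor |R|/L\rfloor<m$ groups and therefore must leave some group intact. Since $mL=N/r-O(1)$, this delivers the target tolerance $t=N/r-o(N)$.

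The main obstacle is the balanced Tverberg step: producing a Tverberg partition of $G_j$ in which every block has size at least $L$. Classical Tverberg only guarantees nonempty blocks, and in the worst case $|S_{i,j}|=1$, which collapses the above argument to a tolerance of only $O(N/((r-1)(d+1)+1))$. I would handle this by appealing to a Tverberg-type theorem with prescribed balanced block sizes, which is available once $L$ is sufficiently large in terms of $r$ and $d$; if an exact equal-cardinality version is not readily available, one can instead enlarge the groups slightly to size $rL+c(r,d)$ and run iterated Tverberg arguments inside each to force $|S_{i,j}|\geq L$, the constant slack $c(r,d)$ being absorbed into the $o(N)$ error term in the final tolerance.
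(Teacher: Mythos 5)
The step asserting that a surviving group $j$ (one for which no block $S_{i,j}$ is entirely contained in $R$) still has $p_j\in\conv(T_i\setminus R)$ for every $i$ is false, and this is where the argument breaks. The witness $p_j$ is a fixed convex combination of points of $S_{i,j}$; removing even one point of $S_{i,j}$ that carries positive weight in that combination can already push $p_j$ out of $\conv(S_{i,j}\setminus R)$, no matter how many other points of $S_{i,j}$ remain. In the worst case $p_j$ is a Carath\'eodory combination of only $d+1$ points of $S_{i,j}$, so deleting any one of those $d+1$ points kills the witness for that group even though $|S_{i,j}\setminus R|\ge L-1$. Padding the blocks to size $L$ (which is easy: extend a Tverberg partition by distributing extra points arbitrarily, since convex hulls only grow) therefore does not buy tolerance $L$; ``not all of $S_{i,j}$ removed'' does not imply ``$p_j$ survives.''

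To repair the pigeonhole you would need each within-group partition of $G_j$ to itself tolerate the removal of $L-1$ points, so that the group losing fewest points (fewer than $L$ whenever $|R|<mL$) still has $\bigcap_i\conv(S_{i,j}\setminus R)\neq\emptyset$. But that is exactly a tolerant Tverberg statement for $rL$ points, so the reduction is circular: feeding in the only a priori bound available (tolerance roughly $n/((r-1)(d+1))$ from $n$ points, as in Sober\'on--Strausz \cite{Soberon:2012er}) gives back a global tolerance of order $N/((r-1)(d+1))$, precisely the weak bound you were trying to beat. The leading coefficient $1/r$ requires a different mechanism. The cited proofs do not decompose the point set into disjoint bounded groups; Garc\'ia-Col\'in--Raggi--Rold\'an-Pensado \cite{GarciaColin:2017id} use a global argument, and Sober\'on's sharpening \cite{Soberon:2018gn,Soberon:2019hm} (the template followed in Sections \ref{sec:prelim}--\ref{sec:expectation-bounds} of this paper) lifts all $N$ points via Sarkaria's tensor trick, observes that a uniformly random choice from each $r$-block lands in any fixed closed half-space through the origin with probability at least $1/r$, and then applies concentration over a polynomial net of half-spaces. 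No variant of the block decomposition you propose reaches $N/r-o(N)$.
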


There are improved bounds on $t$ for small dimension or small number of parts \cite{Soberon:2012er, Mulzer:2013je, Bereg:2020jy}.  The bounds on the $o(N)$ term have been improved to be polynomial in terms of $N,r,d$, and for $r,d$ fixed it can be replaced by $O(\sqrt{N \ln (N)})$ using the probabilistic method \cite{Soberon:2018gn, Soberon:2019hm}.  Another way to motivate this variation is to think of Tverberg partitions as a game.  First, you make a Tverberg partition.  Then, your enemy sees the partition and removes up to $t$ points.  Your enemy wins if the resulting partition is no longer a Tverberg partition, and you win if it is.  What is the largest value of $t$ you can accept while guaranteeing victory for any set of $N$ points? If the enemy wins, we say that the partition has been broken.

The most surprising aspect of the theorem above is that the leading term that defines $t$ does not depend on the dimension.  It seems that for large values of $N$, the geometry becomes less relevant.  The results of this manuscript show that, for some variations of Tverberg's theorem with tolerance, simple geometric conditions characterize the value of $t$.

There are a few ways to interpret what a ``colorful Tverberg with tolerance'' should mean.  Since the colorful Tverberg theorem imposes conditions on how the partition interacts on the color classes, we impose similar conditions on the removal of points.  We restrict the removal of points to removal of color classes: if our enemy wants to remove a point of color $X_i$, she removes all of $X_i$.  With this condition in mind, the following result is known.  For a positive integer $r$, let $p_r$ the probability that a random permutation of $r$ numbers has at least one fixed point.

\begin{theorem}[Sober\'on 2018 \cite{Soberon:2018gn}]\label{theorem-old-colorful}
Let $r,d$ be fixed, and let $N$ be a positive integer.  There is an integer $t = p_r N - O(\sqrt{N \ln N})$ such that the following is true.  For any $N$ color classes of $r$ points each in $\rr^d$, there is a colorful partition of their union into $r$ sets that remains a Tverberg partition even if any $t$ color classes are removed.
\end{theorem}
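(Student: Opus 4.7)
The strategy is the probabilistic method. For each color class $X_i = \{x_i^1, \ldots, x_i^r\}$, sample independently a uniformly random permutation $\sigma_i \in S_r$ and set $A_j = \{x_i^{\sigma_i^{-1}(j)} : 1 \le i \le N\}$, giving a random colorful partition $\{A_1,\ldots,A_r\}$. Write $c_i = \frac{1}{r}\sum_k x_i^k$ for the centroid of $X_i$ and $\bar c = \frac{1}{N}\sum_i c_i$. I will try to show that with positive probability this partition satisfies $\bar c \in \conv\bigl(A_j \setminus \bigcup_{i \in T}X_i\bigr)$ for every part $j$ and every $T \subseteq [N]$ with $|T| \le t$; if so, the partition has tolerance $t$.

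The first main step is concentration. For a fixed unit direction $u$, each part $j$, and each removal set $T$, the reduced centroid projected onto $u$ is a normalized sum of bounded independent random variables with mean $\langle \bar c, u\rangle + o(1)$, so Hoeffding's inequality gives deviation $O(\sqrt{\ln(1/\delta)/N})$ with probability at least $1-\delta$. Combined with a standard $\varepsilon$-net over directions (of size depending on $d$ only) and a Carath\'eodory-type depth argument, this upgrades ``centroid close to $\bar c$'' to ``$\bar c$ lies inside the convex hull of each reduced part.''

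The second and more delicate step is where the constant $p_r$ enters. For each color class $X_i$ and each direction $u$, the projections $(\langle x_i^{\sigma_i^{-1}(j)} - c_i, u\rangle)_{j=1}^{r}$ are a random permutation of $(\langle x_i^k - c_i, u\rangle)_{k=1}^r$, whose entries sum to zero. Consequently, removing $X_i$ shifts the $r$ part centroids in a zero-sum manner, so the adversary cannot simultaneously pull all parts to the same side of a hyperplane through $\bar c$. A careful analysis of this constrained adversary --- leveraging that the probability a uniformly random permutation on $[r]$ has at least one fixed point is exactly $p_r$ --- should yield the sharper tolerance $t = p_r N - O(\sqrt{N \ln N})$ rather than the weaker bound $\approx N/2$ that a naive depth argument produces. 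A union bound over the $\varepsilon$-net and the $r$ parts finishes the argument.

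\textbf{Main obstacle.} Pinning down the constant $p_r$ precisely is the delicate step: a direct Hoeffding-plus-depth computation yields only tolerance of order $N/2$, and one must exploit the fact that the adversary is restricted to removing entire color classes (rather than individual points) to obtain $p_r N$. I expect this to require either a conditioning argument on the fixed-point pattern of the $\sigma_i$, or a two-stage construction that first decides which color classes carry fixed points and then samples derangements on the rest, with a careful accounting of the adversary's damage in each case.
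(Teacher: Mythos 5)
Your proposal correctly identifies the probabilistic method and the role of the fixed-point probability $p_r$, but it has a genuine gap, and the mechanism you propose would not deliver the result. The paper's (i.e.\ Sober\'on's) proof works in the Sarkaria-lifted space $\rr^n$, $n=(r-1)(d+1)$: each color class $X_i$ becomes an $r$-block whose $r$ columns all capture the origin, so for \emph{every} closed half space $H$ through the origin and every column there is at least one lifted point in $H$. A random colorful choice from the $r$-block then misses $H$ entirely exactly when a uniformly random permutation avoids the nonzero entries of a certain $\{0,1\}$ matrix with one $1$ per column, and this is minimized by permutation matrices, giving $\Pr[\text{hit}]\ge p_r$. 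Hoeffding plus a union bound over the $(Nr^2)^n$ combinatorially distinct half spaces through the origin finishes it, and Remark~\ref{cara-tver} translates ``origin still captured after $t$ removals'' into ``Tverberg partition with tolerance $t$.''

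Your argument instead targets the grand centroid $\bar c$ directly in $\rr^d$ and applies Hoeffding to the \emph{centroid} of each part. This is the wrong quantity: having the centroid of $A_j$ close to $\bar c$ does not imply $\bar c\in\conv(A_j)$, and more importantly you need to lower bound, for every halfspace $H^+$ bounded by a hyperplane through $\bar c$, the \emph{number of color classes} contributing a point of $A_j$ to $H^+$, so that after removing $t$ classes at least one survives. That count is not uniformly bounded below in $\rr^d$: a color class $X_i$ can lie entirely on one side of a hyperplane through $\bar c$, in which case it contributes nothing with probability $1$ rather than with probability $\ge p_r$. This is exactly the obstruction that the Sarkaria lift removes, since in $\rr^n$ each column captures the origin by construction (its $r$ points sum to zero after weighting). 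Your ``zero-sum shift'' observation is a shadow of this fact, but acting on centroids it does not give the per-class hit probability $p_r$; the constant $p_r$ arises from the extremal analysis of permutation matrices in the lifted picture, not from a conditioning on the fixed-point pattern of the $\sigma_i$'s or a two-stage derangement construction. To repair your argument you essentially must replace ``centroid concentration plus $\varepsilon$-net'' with ``count of contributing color classes per half space'' and introduce the lift (or some equivalent device ensuring every class contributes to every relevant half space with probability at least $p_r$).
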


The term $p_r$ may seem strange, as $p_r \to 1-1/e$ as $r\to \infty$ but it is not monotone.  Moreover, the smallest value that $p_r$ can take is when $r=2$, giving $p_2 = 1/2$.  This means that the worst case for Theorem \ref{theorem-old-colorful} is when $r=2$.  For most Tverberg-type results, the case $r=2$ is the simplest one.  The second author conjectured that the factor $p_r$ was unnecessary, and that one should be able to achieve $t=N-o(N)$.

\begin{conjecture}[Sober\'on 2018 {\cite[Conjecture 6.1]{Soberon:2018gn}}]\label{conjecture-wrong}
	In Theorem \ref{theorem-old-colorful}, the optimal value of $t$ is $t=N-o(N)$.
\end{conjecture}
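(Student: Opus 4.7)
The first strategy I would try is to refine the probabilistic method behind Theorem~\ref{theorem-old-colorful}. Pick a uniformly random colorful partition $\pi = (A_1, \ldots, A_r)$ of the $Nr$ points and attempt a union bound: for every subset $S \subseteq \{1, \ldots, N\}$ of surviving color classes of size $s := N - t$, the restricted partition $\pi|_S = (A_1 \cap Y_S, \ldots, A_r \cap Y_S)$ with $Y_S = \bigcup_{i \in S} X_i$ should be a Tverberg partition. With the target $t = N - o(N)$, we have $s = o(N)$ and $\binom{N}{s} = \exp(O(s \log N))$, so the per-subset failure probability must decay faster than $\exp(-\omega(s \log N))$.

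To achieve such a decay rate, I would try to strengthen the ``single Tverberg point'' argument that yields only the factor $p_r$ in Theorem~\ref{theorem-old-colorful}. One concrete attempt is to fix a candidate intersection point $z$ \emph{before} drawing $\pi$ --- say a point of high Tukey depth for the full configuration, or a centerpoint --- and then estimate, for each $S$, the probability that $z \in \conv(A_i \cap Y_S)$ for every $i$. For a random colorful partition, the indicator variables of whether $z$ lies on a given side of a separating hyperplane are bounded-difference functions of the underlying independent permutations, so Hoeffding or McDiarmid-type inequalities could in principle give the required exponential decay. Combined with a colorful Carath\'eodory-style rounding argument to pass from ``$z$ is separated from some $A_i \cap Y_S$'' back to ``$\pi|_S$ is not Tverberg,'' this would push the leading constant from $p_r$ up to $1$.

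The main obstacle I anticipate, and the reason I suspect the conjecture is false as stated, is that this strategy breaks down on adversarial configurations. The union bound is extraordinarily demanding --- we must control of order $N^s$ surviving subsets simultaneously --- and the event ``$\pi|_S$ is Tverberg'' is a hard geometric constraint, not a soft concentration event. If the color classes are arranged so that many small sub-families $S$ admit no Tverberg partition at all (for instance, when the points within each color class lie close to a common low-dimensional flat, leaving essentially no combinatorial flexibility), then no colorful partition $\pi$ of the entire point set can meet the requirement uniformly in $S$. The natural way forward is therefore to identify a geometric invariant that controls the optimal tolerance, prove the conjecture under the extremal value of this invariant, and complement the positive result with constructions showing that the bound $t = p_r N$ is essentially tight at the other extreme --- which is precisely the kind of geometric characterization the abstract promises.
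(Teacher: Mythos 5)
Your instinct that the conjecture is false is correct --- the paper disproves it --- and your closing paragraph correctly anticipates the \emph{shape} of the paper's eventual answer (a geometric invariant governing the optimal tolerance, with the conjecture holding only at one extreme of that invariant). But as a resolution of the statement your proposal has a genuine gap: you never produce a counterexample, and the heuristic you offer for \emph{why} the conjecture should fail is not the mechanism the paper uses. You suggest configurations where ``many small sub-families $S$ admit no Tverberg partition at all,'' e.g.\ color classes near a low-dimensional flat. That is not what goes wrong. In the paper's counterexample --- a \emph{perfect split} configuration --- every surviving sub-family still admits plenty of Tverberg partitions; what fails is something sharper, namely that for \emph{every fixed} colorful partition $(A_1,\ldots,A_r)$ of the whole set, the adversary can exploit the half-space structure to pick a deadly set of $p_r N$ colors to delete.

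The missing idea is concrete: choose $r$ open half-spaces $H_1,\ldots,H_r$ in $\rr^d$ with empty intersection, and place each color class so that each $H_i$ contains exactly $r-1$ of its $r$ points (so each point is excluded by exactly one half-space). Given any colorful partition $(A_1,\ldots,A_r)$, label the half-spaces by a \emph{uniformly random} bijection, and delete a color class $X=\{x_1,\ldots,x_r : x_j\in A_j\}$ whenever some $x_i$ lies in $H_i^{c}$. Whatever remains of $A_i$ then lies inside $H_i$, and since $\bigcap_i H_i=\emptyset$ the surviving partition is not Tverberg. For each color class the probability of \emph{not} being deleted is exactly the probability that a random permutation of $[r]$ is a derangement, so the expected number of deletions is $p_r N$; a realization achieving at most the mean gives the claimed $p_r N$ removable colors. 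This is a short, deterministic-construction-plus-first-moment argument, quite different from the McDiarmid/union-bound machinery you propose, and it is required to turn your (correct) suspicion of falsity into an actual disproof. Note also that this construction only exists when $r\le d+1$, which is why the paper restricts its optimality claim to that range.
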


In this paper, we disprove Conjecture \ref{conjecture-wrong} by showing that Theorem \ref{theorem-old-colorful} is asymptotically optimal for every $r$ such that $r \le d+1$.

\begin{theorem}
	Let $d \ge 1, d+1 \ge r \ge 2, N \ge 1$ be integers.  There exists a family of $N$ color classes of $r$ points each in $\rr^d$ such that any colorful partition of them fails to be a Tverberg partition after the removal of at most $p_rN$ color classes.
\end{theorem}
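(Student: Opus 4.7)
The plan is to exhibit a highly symmetric point configuration for which the Tverberg condition reduces to a purely combinatorial statement about $N$ permutations of $[r]$, and then to find a small set of color classes to remove via a one-line probabilistic argument whose expected size is exactly $p_r N$.

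Since $r \le d+1$, I will fix $r$ affinely independent points $v_1, \ldots, v_r \in \rr^d$, the vertices of an $(r-1)$-dimensional simplex $\Delta$, and take each of the $N$ color classes $X_i$ to be a copy of $\{v_1, \ldots, v_r\}$ placed at the same locations. Any colorful partition into parts $P_1, \ldots, P_r$ is then encoded by $N$ permutations $\pi_1, \ldots, \pi_N$ of $[r]$: the copy of $v_j$ from $X_i$ goes to $P_{\pi_i(j)}$. The key observation is that inside a simplex the intersection of any collection of faces equals the face spanned by the intersection of their vertex sets, so $\bigcap_{k=1}^r \conv(P_k) \ne \emptyset$ if and only if some vertex $v_j$ appears in every $P_k$. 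Translating to the encoding, the partition is a Tverberg partition exactly when there is some $j \in [r]$ with $\{\pi_1(j), \ldots, \pi_N(j)\} = [r]$, and removing a set $T \subseteq [N]$ of color classes breaks the partition exactly when for every $j$ there is some $k_j \in [r]$ with $\pi_i(j) \ne k_j$ for all $i \notin T$.

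To produce such a $T$ of size at most $p_r N$, I would sample a uniformly random permutation $\sigma$ of $[r]$ and set
\[
T_\sigma = \{\, i \in [N] : \pi_i(j) = \sigma(j) \text{ for some } j \in [r]\,\}.
\]
The choices $k_j = \sigma(j)$ certify that removing $T_\sigma$ breaks the partition. For fixed $i$, the event $i \in T_\sigma$ is the event that the permutation $\sigma^{-1}\pi_i$ has a fixed point; since $\sigma^{-1}\pi_i$ is uniformly distributed in the symmetric group on $[r]$, this event has probability exactly $p_r$, so $\mathbb{E}\,|T_\sigma| = p_r N$ and some realization achieves $|T_\sigma| \le p_r N$. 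The main obstacle I anticipate is conceptual rather than technical: one has to guess a configuration rigid enough that the geometric intersection problem collapses onto a permutation-agreement question whose matching constant is precisely the complement of the derangement probability. The repeated-simplex construction is exactly that, since it turns the Tverberg condition into a statement about columns of the matrix $(\pi_i(j))_{i,j}$, and the probability that a uniformly random permutation agrees with a fixed one somewhere is, by definition, $p_r$.
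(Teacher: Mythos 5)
Your proof is correct and is, at its core, the same argument as the paper's. The paper defines a \emph{perfect split} configuration (there are $r$ open half-spaces $H_1,\dots,H_r$ with empty intersection, each containing exactly $r-1$ of the $r$ points of every color), randomly relabels the half-spaces, removes a color whenever the point placed in part $A_i$ lies outside $H_i$, and observes that the probability a color is removed is exactly the fixed-point probability $p_r$. Your repeated-simplex configuration is a concrete instance of a perfect split: taking $H_i$ to be an open half-space bounded by a hyperplane through (near) the centroid with inner normal $c-v_i$, one gets $v_j \in H_i$ for $j\neq i$, $v_i\notin H_i$, and $\bigcap_i H_i=\emptyset$ since $\sum_i (c-v_i)=0$. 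Unwinding the paper's random labeling in that instance yields precisely your random permutation $\sigma$ and your removal set $T_\sigma$. The only genuine difference is in how the ``broken'' conclusion is justified: you use the simplex-specific fact that $\bigcap_k \conv(S_k)=\conv\bigl(\bigcap_k S_k\bigr)$ for subsets of an affinely independent set, while the paper uses the half-space formulation, which has the advantage of applying to any perfect split configuration and of connecting directly with the machinery used elsewhere in the paper. Both routes give the same bound and both are sound.
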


We prove the theorem above in Section \ref{sec:simplebound}. For $r=2$, we have a much stronger statement.

\begin{theorem}\label{theorem:optimal-radon}
	Let $d \ge 1, N \ge N' \ge 1$ be integers.    Suppose we have $N$ pairs of points in $\rr^d$(each considered as a color class) such that the maximum number of pairs that any hyperplane splits simultaneously is $N'$.  Then, there exists a colorful partition of the pairs that remains a Tverberg partition even if any $t=N-\frac{N'}{2} - O(\sqrt{N' \ln N})$ pairs are removed.  Moreover, any colorful partition fails to be a Tverberg partition after the removal of at most $N-\frac{N'}{2}$ pairs.    	
\end{theorem}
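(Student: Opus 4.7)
The plan is to reformulate tolerance in terms of hyperplane alignments and then handle the two directions separately: a random colorful partition for the existence bound, and a direct use of the extremal hyperplane for the matching lower bound.

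First I would set up the language. For each of the $N$ pairs, a colorful partition picks an ordering $(a_i,b_i)$, sending $a_i$ to side $A$ and $b_i$ to side $B$. Call pair $i$ \emph{aligned} with an oriented hyperplane $H$ (with positive side $H^+$) if $a_i\in H^+$ and $b_i \in H^-$, and let $a(H)$ count aligned pairs. A short check shows that after removing a set $T$ of pairs, the hyperplane $H$ strictly separates the surviving $A$-side from the surviving $B$-side iff $T$ contains every pair not aligned with $H$, so $|T|\geq N-a(H)$. Thus tolerance $t$ is equivalent to $\max_H a(H)\leq N-t-1$, and the problem becomes one of choosing orientations that make every $a(H)$ small.

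For the existence bound, I would draw each pair's ordering independently and uniformly at random. For a fixed oriented $H$, only the $s(H)\leq N'$ pairs split by $H$ contribute, and each such pair is aligned with probability $1/2$ independently. Hoeffding's inequality then gives
\[
\Pr\!\left[a(H)\geq \tfrac{N'}{2}+\lambda\right]\leq \exp\!\left(-\frac{2\lambda^2}{N'}\right).
\]
Only $O(N^d)$ distinct open half-spaces arise on the $2N$ underlying points (the standard hyperplane-arrangement count), so taking $\lambda = C_d\sqrt{N'\ln N}$ and union-bounding over those combinatorial types leaves positive probability that every $H$ satisfies $a(H) < N'/2 + C_d\sqrt{N'\ln N}$ simultaneously, producing $t = N - N'/2 - O(\sqrt{N'\ln N})$.

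For the matching lower bound, I would pick a hyperplane $H^*$ splitting exactly $N'$ pairs and consider its two orientations $H^*$ and $\bar H^*$. Given any colorful partition, each split pair is aligned with exactly one of the two orientations, so $a(H^*) + a(\bar H^*) = N'$ and at least one of them is $\geq N'/2$. Removing the $N-a \leq N-N'/2$ pairs unaligned with that orientation breaks the partition. The main obstacle I anticipate is making the $O(N^d)$ union bound fully rigorous: I need to argue that a generic perturbation of a hyperplane preserves which pairs it splits and which are aligned, so that one representative per combinatorial type of half-space suffices. Once that is cleanly handled, the Hoeffding tail and the remainder of the argument are routine.
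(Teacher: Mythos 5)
Your proof is correct, but it takes a genuinely different and more elementary route than the paper's. The paper obtains Theorem~\ref{theorem:optimal-radon} as the $r=2$ specialization of the general Theorem~\ref{theorem:geometric-characterization}, whose proof passes through the Sarkaria tensor lift to $\rr^{(r-1)(d+1)}$, the colorful Carath\'eodory theorem, and Remark~\ref{cara-tver} translating ``Tverberg partition'' into ``captures the origin''; the quantity $f(N)$ appearing there coincides with $N'$ when $r=2$. You instead never leave $\rr^d$: you observe that for $r=2$ a Radon partition fails exactly when a strictly separating oriented hyperplane exists, define an aligned pair, and show that the tolerance is precisely $N-1-\max_H a(H)$, reducing the whole problem to a min--max over oriented hyperplanes. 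Independent random orientations plus Hoeffding then give the concentration, and a union bound over the $O(N^d)$ combinatorial types of half-spaces on the $2N$ underlying points finishes the existence direction; this is both more elementary and slightly tighter than the paper's $(Nr^2)^{(r-1)(d+1)}$ count of cells in the lifted space. Your matching upper bound via the two orientations of an extremal hyperplane is essentially the paper's argument (averaging over the $r!$ relabelings) written out for $r=2$. What your approach buys is transparency and self-containment in the Radon case; what the paper's route buys is a single argument that also covers $r>2$, where no direct separating-hyperplane reformulation of Tverberg partitions is available. The perturbation issue you flag is real but routine: $a(H)$ depends only on the strict sign pattern of $H$ on the $2N$ points, and a strictly separating hyperplane of two disjoint compact convex hulls can always be perturbed to avoid all $2N$ points while preserving strict separation, so one generic representative per sign pattern suffices.
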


Therefore, for $r=2$ Conjecture \ref{conjecture-wrong} holds if and only if $N' = o(N)$.  For $d+1 \ge r > 2$, we characterize geometric conditions under which Conjecture \ref{conjecture-wrong} holds in Section \ref{sec:expectation-bounds}. These conditions are given in terms of \textit{perfect split configurations}, defined in Section \ref{sec:simplebound}.

For $r > d+1$, the bound of Theorem \ref{theorem-old-colorful} is not optimal.  In Section \ref{sec:newbounds} we use Helly's theorem in addition to a probabilistic argument to show that the leading term for the tolerance is $q(r,d)N$, where $q(r,d)>p_r$ is a parameter of certain $\{0,1\}$-matrices defined combinatorially.  We fully describe the set of $\{0,1\}$ matrices that can be used to compute this parameter. We also present some open questions. 


\section{Optimal bounds for $r \le d+1$}\label{sec:simplebound}

In this section, we prove an upper bound for tolerance of colorful partitions for $r \le d+1$.  We include it before the preliminaries as it requires no background.  The bound we prove matches the leading term of the upper bound provided by Sober\'on \cite{Soberon:2018gn}. This entails constructing a set of colored points for which any colorful Tverberg partition can be broken by the removal of $p_r N$ points. 

\begin{definition}
Let $N, r$ be positive integers.  Given $N$ color classes of $r$ points of $\rr^d$ each, we say it is a \textbf{perfect split} point configuration if there exist $r$ open half spaces such that
\begin{itemize}
	\item the intersection of all the half-spaces is empty, and
	\item for any color, each half space contains exactly $r-1$ of the $r$ colored points.
\end{itemize}
\end{definition}

\begin{figure}[htp]
    \centerline{\includegraphics[scale=1]{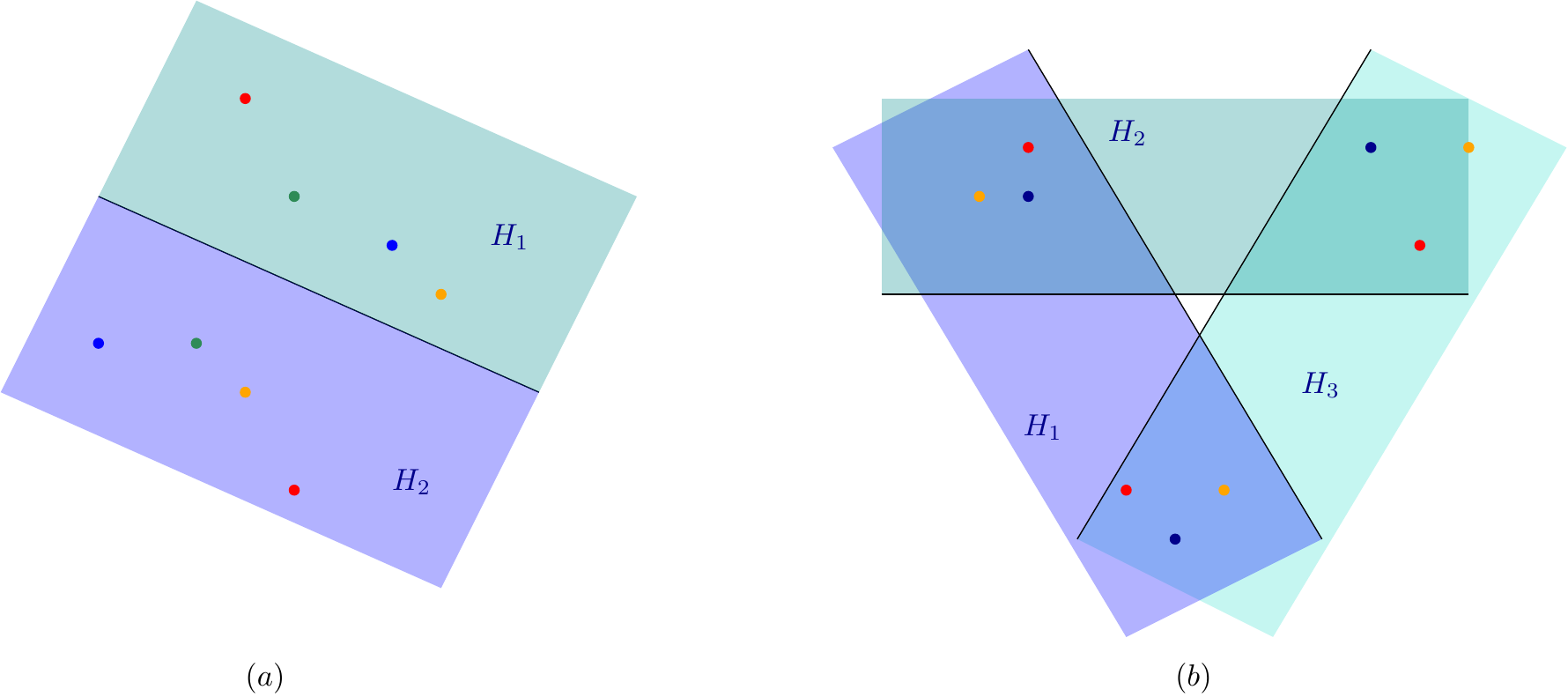}}
    \caption{Examples of two perfect split configurations. (a) Four color classes, each of two points in $\rr^2$. (b)  Three color classes, each of three points in $\rr^2$. }
    \label{fig:splitting-config}
\end{figure}


\begin{theorem}\label{lower_bound}
Suppose we have a perfect split point configuration of $N$ colors and $r$ points of each color in $\rr^d$. For any colorful Tverberg partition of the points into $r$ parts, there exists a choice of $p_rN$ colors one can remove to break the colorful Tverberg partition.
\end{theorem}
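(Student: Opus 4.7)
The plan is to encode each color class as a permutation of $[r]$ recording which point of that color sits outside which half-space, and then run a first-moment argument over $S_r$ to find a single assignment of parts to half-spaces that disagrees with almost all of these encodings.

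First I would fix the $r$ open half-spaces $H_1,\ldots,H_r$ given by the perfect split property, so $\bigcap_i H_i=\emptyset$ and each $H_i$ contains $r-1$ of the $r$ points of every color. The key structural observation is that for every color $c$ the assignment sending a point $p$ of color $c$ to the unique half-space missing $p$ is a bijection from the $r$ points to $\{H_1,\ldots,H_r\}$: each $H_i$ misses exactly one point of color $c$, and no point can belong to every $H_i$ because $\bigcap_i H_i=\emptyset$, so each of the $r$ points misses at least one half-space; the two counts force each point to miss exactly one half-space. Given a colorful Tverberg partition into parts $A_1,\ldots,A_r$, this bijection yields a permutation $\tau_c\in S_r$ with $\tau_c(j)=i$ meaning the color-$c$ point in $A_j$ lies outside $H_i$.

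Next I would set up a combinatorial breaking criterion. For any $\pi\in S_r$, if every color $c$ that survives the removal satisfies $\tau_c(j)\ne\pi(j)$ for all $j$, then every surviving point in $A_j$ lies in $H_{\pi(j)}$, so $\operatorname{conv}(A_j)\subseteq H_{\pi(j)}$, and since $\pi$ is a bijection $\bigcap_j H_{\pi(j)}=\bigcap_i H_i=\emptyset$. The convex hulls therefore cannot share a point and the partition is broken. So, for a fixed $\pi$, the colors that must be removed are exactly those $c$ for which $\tau_c\pi^{-1}$ has a fixed point.

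Finally I would take $\pi$ uniformly at random in $S_r$. For each fixed $c$ the map $\pi\mapsto \tau_c\pi^{-1}$ is a bijection of $S_r$, so $\tau_c\pi^{-1}$ is uniformly distributed and has a fixed point with probability exactly $p_r$. Linearity of expectation then gives expected removal $p_rN$, so some permutation $\pi$ achieves at most $p_rN$ removals, proving the theorem. The main obstacle is the bijection step rather than the averaging: the emptiness of $\bigcap_i H_i$ is used essentially, because otherwise a point could lie in every half-space and the counting argument producing $\tau_c$ would collapse; once $\tau_c$ is established as a permutation, the first-moment argument is a one-line computation.
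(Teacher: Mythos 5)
Your proposal is correct and follows essentially the same route as the paper: fix the split half-spaces, randomly permute the assignment of half-spaces to parts, observe that a color is removed iff a derived permutation has a fixed point, and apply linearity of expectation to get $p_rN$. The one place you go beyond the paper is in explicitly verifying (via the double counting of misses and the emptiness of $\bigcap_i H_i$) that the point-to-missing-half-space map $\tau_c$ is a genuine bijection, a step the paper's proof uses implicitly without spelling out.
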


\begin{proof}
We take a probabilistic approach. Let $A_1, \ldots, A_r$ be a colorful partition of the set of points. For each color class $X$, we assign an order to its $r$ points based on which part the colored point belongs to. Formally, 
\[ X = \{ x_1, \hdots x_r : x_j \in A_j \mbox{ for }j=1,\ldots, r\}.  \]
Next, we assign random labels $H_1, \hdots, H_r$ to each half-space defining the perfect split configuration. A color class $X$ is removed if there exists an $i$ such that $x_i \in H_i^c$. Note that this guarantees what is left of $A_i$ is contained in $H_i$.  Since the half spaces $H_i$ have empty intersection, the colorful partition is no longer a Tverberg partition. For a given color class $X$, the probability that at least one half space is assigned the same label as the single point of $X$ which it fails to contain is precisely the probability of having at least one fixed point in a permutation, $p_r$. The expected number of colors removed is $p_rN$.  Therefore, there exists some choice of at most $p_rN$ colors we can remove to break the partition. 
\end{proof}

\section{Preliminaries}\label{prelim}\label{sec:prelim}

This paper extends the method taken in Sober\'on's paper \cite{Soberon:2018gn} which is based on Sarkaria's proof of Tverberg's theorem \cite{Sarkaria:1992vt}. Sarkaria's proof relies on a remarkably simple higher dimensional transformation. This transformation in combination with probabilistic techniques are the main ingredients for the proofs of this paper. 

We begin with a key theorem, the colorful Carath\'eodory's theorem.

\begin{theorem}[B\'ar\'any 1982 \cite{Barany:1982va}]
Suppose we have $n + 1$ sets of points $A_1, \hdots, A_{n + 1}$ in $\rr^n$ such that $ 0 \in \conv(A_i)$ for each $i \in [n + 1]$. Then, there exists a traversal $A = \{ a_i  \in A_i : i \in [n + 1] \}$ such that $ 0 \in \conv (A).$
\end{theorem}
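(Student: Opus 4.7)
My plan is to prove this via a minimum-distance argument over traversals, which is the classical approach. Consider the finite collection of all traversals (one point chosen from each $A_i$), and among them select $A^* = \{a_1^*, \ldots, a_{n+1}^*\}$ for which the nearest point to the origin in $\conv(A^*)$, call it $p$, is closest to $0$. The goal is to show $p = 0$; if so, $A^*$ is the desired traversal. Finiteness of the family of traversals guarantees the minimum is attained.

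Suppose $p \ne 0$, for contradiction. Let $H = \{x : \langle x, p \rangle = \|p\|^2\}$ be the hyperplane through $p$ with normal $p$. By the variational characterization of the nearest point in a convex set, every point of $\conv(A^*)$ lies in the closed half-space $\{x : \langle x, p \rangle \ge \|p\|^2\}$, while $0$ satisfies $\langle 0, p \rangle = 0 < \|p\|^2$, so $H$ strictly separates $0$ from $\conv(A^*)$. Next, apply the ordinary Carath\'eodory theorem to $p \in \conv(A^*) \subset \rr^n$ to write $p$ as a convex combination of at most $n$ of the $n+1$ points $a_i^*$. There is therefore an index $j$ with $a_j^*$ unused in the combination, so $p \in \conv(A^* \setminus \{a_j^*\})$.

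Now examine $A_j$. Since $0 \in \conv(A_j)$ and $\langle 0, p\rangle = 0 < \|p\|^2$, not every point of $A_j$ can satisfy $\langle a, p \rangle \ge \|p\|^2$; otherwise, every convex combination from $A_j$ would too, contradicting $0 \in \conv(A_j)$. Hence there exists $a_j' \in A_j$ with $\langle a_j', p \rangle < \|p\|^2$. Form the new traversal $A' = (A^* \setminus \{a_j^*\}) \cup \{a_j'\}$. Because $p \in \conv(A^* \setminus \{a_j^*\}) \subset \conv(A')$ and $a_j' \in \conv(A')$, the segment from $p$ to $a_j'$ lies in $\conv(A')$; a short step from $p$ toward $a_j'$ strictly decreases the distance to $0$, since that direction has strictly negative inner product with $p$. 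This contradicts the minimality of $A^*$, so $p = 0$.

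The main obstacle is producing the replacement point $a_j' \in A_j$: the argument only works once we have freed up an index $j$ via Carath\'eodory, which is precisely why the hypothesis provides $n+1$ color classes in $\rr^n$ rather than only $n$. The other technical point to verify carefully is that the supporting hyperplane at the foot of the perpendicular from $0$ genuinely gives the separation claimed, so that the replacement strictly reduces the traversal's distance to the origin.
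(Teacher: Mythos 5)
The paper states this result (B\'ar\'any's Colorful Carath\'eodory theorem) without proof, so there is no in-paper argument to compare against. Your proof follows B\'ar\'any's classical minimal-distance argument, and the overall structure is correct, but there is a genuine gap at the central step.

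You write that ``the ordinary Carath\'eodory theorem'' lets you express $p$ as a convex combination of at most $n$ of the $n+1$ points $a_i^*$. The ordinary Carath\'eodory theorem in $\rr^n$ yields at most $n+1$ points, and with $|A^*| = n+1$ that is vacuous: a priori all $n+1$ points could appear with positive coefficient, leaving no index free to swap. What actually drives the bound down to $n$ is the variational fact you state just before. Every $a_i^*$ satisfies $\langle a_i^*, p\rangle \ge \|p\|^2$, and writing $p = \sum_i \lambda_i a_i^*$ with $\sum_i \lambda_i = 1$ and taking the inner product with $p$ gives $\|p\|^2 = \sum_i \lambda_i \langle a_i^*, p\rangle \ge \|p\|^2$; equality forces $\langle a_i^*, p\rangle = \|p\|^2$ for every $i$ with $\lambda_i > 0$. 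Hence every $a_i^*$ that contributes to $p$ lies on $H$, an affine subspace of dimension $n-1$, and Carath\'eodory applied \emph{inside} $H$ gives $p$ as a convex combination of at most $n$ of them. That is the step that frees an index $j$, and without it the exchange argument does not go through. You should make this explicit rather than cite Carath\'eodory in the ambient space.

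Two smaller issues of wording: $H$ does not \emph{strictly} separate $0$ from $\conv(A^*)$, because $p \in \conv(A^*) \cap H$; it is a supporting hyperplane at $p$ with $0$ strictly on the far side. And after the swap you should note that $p$ still lies in $\conv(A')$ precisely because $p \in \conv(A^* \setminus \{a_j^*\}) \subseteq \conv(A')$, which is exactly why the freed index had to exist. With those points tightened, the proof is complete and matches the standard argument.
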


We say that a set $X$ captures the origin if $0 \in \conv X$.  We present a sketch of Sarkaria's proof of Tverberg's theorem, which will introduce several constructions relvant in our proofs. This presentation follows the lines of a simplification by B\'ar\'any and Onn \cite{Barany:1995tg}.

\begin{proof}[Proof of Tverberg's theorem] Let $x_0, x_1, \hdots, x_n$ be points in $\rr^d$ where $n = (r - 1)(d + 1)$. Consider $v_1, v_2, \hdots, v_r \in \rr^{r - 1}$ such that $\sum_{i = 1}^r v_i = 0$ is the unique linear dependence up to scalar multipliers. We may also think of the $v_i$ as vertices of an $(r - 1)$-dimensional simplex.  For each $x \in \rr^d$ we can consider $\overline{x} =\addone{x} \in \rr^{d+1}$ the vector consisiting of appending a coordinate $1$ to $x$.
 We consider the following tensor products
\[ v_i \otimes  \overline{x_j} = v_i  \addone{x_j}^T. \]
Note this makes an $(r - 1) \times (d + 1)$ matrix, which we may consider as a vector in $\rr^n$. We construct the sets $A_j$ (corresponding to each point $x_j$)
\[ A_j = \{ v_i \otimes \overline{x_j} \}_{i =1}^r \]
Note that for a given $j$, 
\[ \sum_{i = 1}^r v_i \otimes \overline{x_j} = \left( \sum_{i = 1}^r v_i \right) \otimes \overline{x_j}  = 0, \]
which means that $0 \in \text{conv}A_j$. With this, we may apply Colorful Caratheodory to  each $A_j$ and get some traversal of the $A_j$ such that
\[ \sum_{j = 1}^n \alpha_j v_{f(j)} \otimes \overline{x_j} = 0 \]
where $f : [n] \rightarrow [r]$ indicates which member of $A_j$ is picked in the traversal. Consider a vector $u \in \rr^{r - 1}$ orthogonal to $v_3, \hdots, v_r$ where $\langle u, v_1 \rangle = 1$. Therefore, $\langle u, v_2 \rangle = -1$. If we multiply the above equation on the left by $u^T$, we get
\[ \sum_{j | f(j) = 1} \alpha_j \addone{x_j}^T =  \sum_{j | f(j) = 2} \alpha_j \addone{x_j}^T. \]
The last coordinate implies that the coefficients on both sides have the same sum.  Thus the convex hulls of part 1 and part 2 intersect.  If we call $p$ the point of intersection found above, we may apply a similar argument to see that the convex hulls of the $r$ parts intersect in $p$. 
\end{proof}

\begin{remark}\label{cara-tver}
The proof above provides a parallel between convex hulls of parts intersecting in $\rr^d$ and capturing the origin in $\rr^n$, made explicit in \cite{Arocha:2009ft}. Consider a partition of the set of points $X = \{ x_0, x_1, \hdots, x_n \}$ into $A_1, \hdots, A_r$. Then, 
\[ \bigcap_{i = 1}^r \conv(A_i) \not = \emptyset \]
if and only if
\[ 0 \in \conv(\{ v_{f(j)} \otimes \overline{x_j} \}_{j = 1}^n ) \]
where $f(j)=i$ if and only if $x_j\in A_i$ (i.e $f$ indicates which set each point belongs in). 
\end{remark}

We look more closely at Sarkaria's tensor trick and make several observations. Let $g_i: \rr^d \rightarrow \rr^{(r - 1) \times( d + 1)}$ be 
\[ g_i(x) = v_i \otimes \overline{x}. \]
Consider the $d$ dimensional affine space
\[ U_i = \{ g_i(x) \mid x \in \rr^d \}. \]
We may project each these spaces into $\rr^d$ with the following functions $f_i : \rr^{(r - 1) \times (d + 1)} \rightarrow \rr^d$
\[ f_i(y) = \Pi \left( y^T \frac{v_i}{||v_i||^2} \right) \]
where $\Pi$ denotes the orthogonal projection from $\rr^{d + 1}$ to $\rr^d$ with null space $e^{d + 1}$. More intuitively, $f_i$ is the left inverse of $g_i$ since
\begin{align*} 
f_i(g_i(x)) &= f_i(v_i \otimes \overline{x}) \\
&= \Pi \left(\left( v_i \addone{x}^T \right)^T \frac{v_i}{||v_i||^2} \right) \\
&= \Pi \left( \addone{x} \frac{v_i^T v_i}{||v_i||^2} \right) \\
&= x.
\end{align*}

Throughout the rest of this manuscript, the spaces $U_i$ and the function $f_i$ will refer to those constructed above.

\begin{theorem}\label{high-dim-half-spaces}
Let $r\ge 2,d\ge 1$ be positive integers, and $n = (r-1)(d+1)$. For $i=1,\ldots, r$, consider $v_i$, $U_i$, and $f_i$ as constructed above. Let $H$ be an open half space whose boundary hyperplane contains the origin. Then, for $i=1,\ldots, r$, the set $f_i(H \cap U_i)$ is an open half-space in $\rr^d$ and 
\[ \bigcap_{i = 1}^r f_i(H \cap U_i) = \emptyset. \]
Furthermore, 
\[ \bigcup_{i = 1}^r f_i(\overline{H} \cap U_i) = \rr^d \]
where $\overline{H}$ is the closure of $H$. 
\end{theorem}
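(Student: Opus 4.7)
The plan is to parametrize $H$ by its normal and push everything through the tensor structure so that the three assertions become statements about the linear functional $v_i^{T} w$ applied to the lifted point $\overline{x}$, where $w$ is the normal of $H$.

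First I would write $H=\{y\in\rr^{n}:\langle w,y\rangle>0\}$ with $w$ a nonzero $(r-1)\times(d+1)$ matrix (since $H$'s boundary passes through the origin, no constant term is needed). Using the Frobenius inner product, for a point $y=g_{i}(x)=v_{i}\overline{x}^{T}\in U_{i}$ one has
\[
\langle w,v_{i}\overline{x}^{T}\rangle
=\tr\bigl(w\,\overline{x}\,v_{i}^{T}\bigr)
=v_{i}^{T}w\,\overline{x}
=\langle a_{i},\overline{x}\rangle,
\qquad a_{i}:=w^{T}v_{i}\in\rr^{d+1}.
\]
Because $f_{i}\circ g_{i}=\mathrm{id}_{\rr^{d}}$, this immediately identifies
\[
f_{i}(H\cap U_{i})=\{x\in\rr^{d}:a_{i}^{T}\overline{x}>0\},
\]
which is an open affine half-space of $\rr^{d}$, completing the first assertion.

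For the intersection claim I would argue by contradiction. Suppose $x^{*}\in\bigcap_{i=1}^{r}f_{i}(H\cap U_{i})$; then $v_{i}^{T}w\,\overline{x^{*}}>0$ for every $i$. Summing these strict inequalities and pulling out the vector $w\,\overline{x^{*}}\in\rr^{r-1}$,
\[
0<\sum_{i=1}^{r}v_{i}^{T}w\,\overline{x^{*}}=\Bigl(\sum_{i=1}^{r}v_{i}\Bigr)^{T}w\,\overline{x^{*}}=0,
\]
a contradiction. The covering statement is the same identity read the other way: for an arbitrary $x\in\rr^{d}$, $\sum_{i=1}^{r}a_{i}^{T}\overline{x}=0$, so at least one summand is $\geq 0$, which is exactly the condition that $x\in f_{i}(\overline{H}\cap U_{i})$ for that index $i$.

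The only step that needs care is confirming that $f_{i}(H\cap U_{i})$ really is an honest open half-space of $\rr^{d}$, rather than degenerating to $\emptyset$ or all of $\rr^{d}$; this amounts to checking that $a_{i}=w^{T}v_{i}$ has a nonzero component among its first $d$ coordinates. The main obstacle in the write-up will be handling these potential degeneracies cleanly — either by adopting the convention that $\emptyset$ and $\rr^{d}$ are permitted half-spaces, or by noting that the other two conclusions depend only on the identity $\sum_{i}v_{i}=0$ and therefore survive any such degeneracy. Either way the substance of the proof is the single line above: the pairing $\langle w,v_{i}\otimes\overline{x}\rangle=v_{i}^{T}w\,\overline{x}$, combined with the defining relation of the $v_{i}$.
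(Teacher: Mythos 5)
Your proof is correct and takes essentially the same route as the paper: both express membership of $x$ in $f_i(H\cap U_i)$ as a linear inequality in $\overline{x}$ (the paper via $\tr((v_i\otimes\overline{x})^TZ)>0$, you via the Frobenius pairing $a_i^T\overline{x}>0$ with $a_i=w^Tv_i$, which are the same identity) and then invoke the relation $\sum_i v_i=0$ to get the intersection and covering statements. The only cosmetic difference is that the paper singles out $v_r=-\sum_{i<r}v_i$ and compares two inequalities, whereas you sum all $r$ inequalities directly; both land on the same contradiction $0<0$. Your flag about the possible degeneracy of $a_i$ (when its first $d$ coordinates vanish) is a fair point that the paper glosses over with ``clearly defined a half-space,'' but as you say, the two set-theoretic conclusions depend only on $\sum_i a_i=0$ and survive any such degeneracy.
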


\begin{proof}
The half space $H$ in $\rr^{(r - 1) \times (d + 1)}$ may be expressed as 
\[ H = \{ \tr(A^TZ) > 0 \mid A \in \rr^{(r - 1) \times (d + 1)} \} \]
for some matrix $Z$ in $\rr^{(r - 1) \times (d + 1)}$. Therefore, for $i \in [r]$, we have $x = (x_1, \ldots, x_d)^T \in f_i(H \cap U_i)$ if an only 
\begin{align*}
\tr( (v_i \otimes \overline{x})^T Z ) &> 0 \\
\tr \left( \addone{x} v_i^T Z \right) &> 0 \\
\sum_{j = 1}^d x_j \langle v_i, Z_j \rangle &> - \langle v_i, Z_{d + 1} \rangle,  \\
\end{align*}
where $Z_j$ denotes the $j$th column of $Z$ and $\langle \cdot, \cdot \rangle$ denotes the dot product. The last equation clearly defined a half-space in $\rr^d$.  If $x \in f_r(H \cap U_r)$, then 
\begin{align*}
\tr \left( \left(\left(-\sum_{i = 1}^{r - 1} v_i\right) \otimes \overline{x}\right)^T Z \right) &> 0 \\
\tr \left( \addone{x} \left(-\sum_{i = 1}^{r - 1} v_i\right)^T Z \right) &> 0 \\
\sum_{j = 1}^d x_j \left\langle \left(\sum_{i = 1}^{r - 1} v_i\right), Z_j \right\rangle &< - \left\langle \left(\sum_{i = 1}^{r - 1} v_i\right), Z_{d + 1} \right\rangle  \\
\sum_{i = 1}^{r - 1} \sum_{j = 1}^d x_j \langle v_i, Z_j \rangle &< \sum_{i = 1}^{r - 1}  - \langle v_i Z_{d + 1} \rangle
\end{align*}
An element $x \in f_i(H \cap U_i)$ for each $i \in [r-1]$ will satisfy 
\[ \sum_{i = 1}^{r - 1} \sum_{j = 1}^d x_j \langle v_i, Z_j \rangle > \sum_{i = 1}^{r - 1}  - \langle v_i Z_{d + 1} \rangle. \]
This shows us that a point $x \in \rr^d$ cannot be in $\bigcap_{i = 1}^r f_i(H \cap U_i)$. An analogous analysis shows us that
\[ \bigcup_{i = 1}^r f_i( \overline{H} \cap U_i) = \rr^d. \]
\end{proof}

We say that an (open or closed) half space $H \subset \rr^n$ goes through the origin if its boundary contains the origin.

\section{Bounds for $t$ when $r \leq d + 1$}\label{sec:expectation-bounds}

Let $X_1, X_2, \hdots, X_N$ be color classes of $r$ points each in $\rr^d$. We extend Sober\'on's probabilistic method \cite{Soberon:2018gn} to show that there exists a colorful partition of $X_1, \hdots, X_N$ which resists the removal of any $t$ of the color classes. The difference here is that the bounds on the tolerance we establish depend on the geometric constraints on the points. 

\begin{definition}
Let $H_1, \ldots, H_r$ be a family of half spaces in $\rr^d$ and let $X \subset \rr^d$ be a set of $r$ points of $\rr^d$.  We say that $\{H_1, \ldots, H_r\}$ \textbf{can split} the set $X$ if

\begin{itemize}
\item the intersection of all the half-spaces is empty, and
\item the union of any $k$ of the half-spaces contains at least $k$ points of $X$ for $k=1,\ldots, r$.
\end{itemize}

 In particular, we can label the elements of $X$ as $x_1, \ldots, x_r$ such that $x_i \in H_i$ for each $i$.
\end{definition}

Consider the following example.  If we are given $N > rd$, let $y_1, \ldots, y_N$ be points in $\rr^d$ such that no hyperplane contains more than $d$ of them.  Then, we can consider $N$ color classes $X_1, \ldots, X_N$ of $r$ points each in $\rr^d$, such that each $X_i$ is clustered very close to $y_i$.  Any $r$ hyperplanes intersect the convex hulls of at most $rd$ of the $X_i$.  If we have $r$ half spaces with empty intersection, this implies that at least $N-rd$ of the sets $X_i$ must be contained in the complement of at least one of the half-spaces.  In other words, the half spaces cannot split more than $rd$ of the color classes.

\begin{theorem}\label{theorem:geometric-characterization}
	Let $N, r, d$ be positive integers. Suppose we are given $N$ color classes of $r$ points of $\rr^d$ each.   Let $f(N)$ be the largest number of color classes that a family $H_1, \ldots, H_r$ of half spaces in $\rr^d$ can split.  Then:
	\begin{itemize}
		\item There exists a colorful partition of the $Nr$ points such that it remains a Tverberg partition even if any $t$ color classes are removed, for any
		 \[
		 t \le N - (1-p_r)f(N) - \sqrt{\frac{(d+1)(r-1) f(N) \ln (Nr^2)}{2}}-1.
		 \]
		 \item Every colorful partition of the points fails to be a Tverberg partition after the removal of at most
		 \[
		 N-\left(\frac{1}{r!} \right)f(N)
		 \]
		 color classes.
	\end{itemize}
\end{theorem}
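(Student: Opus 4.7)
The plan is a pair of probabilistic arguments built on Sarkaria's tensor trick: a random-partition argument for the positive bullet, and a random-labeling argument in the style of Theorem \ref{lower_bound} for the negative bullet.

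For the positive bullet, I would draw, for each class $X_k=\{y_{k,1},\ldots,y_{k,r}\}$, an independent uniformly random bijection $\pi_k\colon X_k\to[r]$ and lift $y\in X_k$ to $v_{\pi_k(y)}\otimes\overline{y}\in\rr^{(r-1)(d+1)}$. By Remark \ref{cara-tver} the resulting colorful partition is Tverberg on any subcollection iff the corresponding lifted points capture the origin, so letting $B_H$ be the set of classes whose lifted points all lie in an open half-space $H\subset\rr^{(r-1)(d+1)}$ through the origin, the partition survives any $t$-removal iff $\max_H|B_H|\le N-t-1$. Fix such an $H$; Theorem \ref{high-dim-half-spaces} supplies open half-spaces $H'_i=f_i(H\cap U_i)$ in $\rr^d$ with empty intersection and closures covering $\rr^d$, and $v_i\otimes\overline{x}\in H$ iff $x\in H'_i$. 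Writing $(M_k)_{j,i}=\mathbf{1}[y_{k,j}\in H'_i]$, the event $X_k\in B_H$ is exactly that $\pi_k^{-1}$ is a perfect matching of $M_k$, so $\Pr[X_k\in B_H]=\operatorname{perm}(M_k)/r!$ and this vanishes unless $(H'_1,\ldots,H'_r)$ splits $X_k$. Since the row sums of $M_k$ lie in $[1,r-1]$, a monotonicity-plus-inclusion-exclusion argument gives $\operatorname{perm}(M_k)\le\operatorname{perm}(J-I)=D_r$, so $\Pr[X_k\in B_H]\le 1-p_r$ and $\mathbb{E}[|B_H|]\le(1-p_r)f(N)$. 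Hoeffding's inequality applied to $|B_H|$ (with at most $f(N)$ non-degenerate independent indicators) then yields $\Pr[|B_H|>(1-p_r)f(N)+\lambda]\le\exp(-2\lambda^2/f(N))$, and a union bound over the $O((Nr)^{(r-1)(d+1)})$ combinatorial types of $H$ (cells of the central hyperplane arrangement generated by the $Nr$ lifted points) forces $\lambda=\sqrt{(d+1)(r-1)f(N)\ln(Nr^2)/2}$, giving the stated tolerance.

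For the negative bullet I fix any colorful partition and a family $(H_1,\ldots,H_r)$ in $\rr^d$ realising $f(N)$; for each splittable $X_k$ I label $y_{k,1},\ldots,y_{k,r}$ so that $y_{k,i}\in H_i$. The adversary applies a uniformly random permutation $\tau$ to the half-spaces and removes every class failing $y_{k,i}\in H_{\tau(\pi_k(i))}$ for all $i$; after this removal each surviving part satisfies $A_j\subseteq H_{\tau(j)}$, so $\bigcap_j\conv(A_j)\subseteq\bigcap_j H_{\tau(j)}=\bigcap_jH_j=\emptyset$ and the Tverberg property fails. For each splittable class the retention probability is at least $1/r!$ (attained uniquely by $\tau=\pi_k^{-1}$ when the split is simple), so $\mathbb{E}[\text{retained}]\ge f(N)/r!$ and some $\tau$ realises this, producing an adversary that breaks the partition after at most $N-f(N)/r!$ removals.

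The main obstacle is the permanent inequality $\operatorname{perm}(M_k)\le D_r$. Padding $M_k$ with ones until every row sum equals $r-1$ reduces it, by monotonicity of the permanent on 0/1 matrices, to the statement $\operatorname{perm}(J-P')\le D_r$ for a 0/1 matrix $P'$ of row sums one; if $c_i$ denotes the column of the unique 1 in row $i$ of $P'$, inclusion-exclusion writes this permanent as $\sum_S(-1)^{|S|}(r-|S|)!\,\mathbf{1}[(c_i)_{i\in S}\text{ distinct}]$, and any coincidence $c_i=c_j$ zeroes out every subset containing $\{i,j\}$, strictly decreasing the total relative to the derangement count $D_r$ obtained when the $c_i$'s are distinct. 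The remaining bookkeeping, i.e., counting combinatorial types of central half-spaces against the $Nr$ lifted points and checking that the row sums of $M_k$ fall in $[1,r-1]$, is routine.
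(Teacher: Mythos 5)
Your proposal is correct and follows essentially the same route as the paper's proof: the random colorful choice plus Remark \ref{cara-tver}, a lower bound of $N-(1-p_r)f(N)$ on the expected number of classes meeting a given closed half-space through the origin, Hoeffding plus a union bound over the $(Nr^2)^{(r-1)(d+1)}$ combinatorial types of central half-spaces, and the random-relabeling adversary for the second bullet. The one place you depart is in re-deriving $\Pr[X_k\subset H^c]\le 1-p_r$ via the permanent bound $\operatorname{perm}(M_k)\le D_r$, where the paper simply cites \cite{Soberon:2018gn}; that inequality is true, but your inclusion--exclusion justification (``any coincidence $c_i=c_j$ zeroes out terms, strictly decreasing the total'') is not immediate since the dropped terms carry alternating signs and equality can in fact occur --- a local exchange argument (replacing a repeated value $c_{i'}=a$ by a value $b$ missing from the range of $c$, which one checks only increases the count of admissible permutations) is the cleanest way to close this.
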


We begin by looking at the Sarkaria transformation of our color classes. For each color class $X = \{ x^1, x^2, \hdots, x^r \}$, we consider the following matrix of points

\begin{align*}
\begin{bmatrix}
v_1 \otimes \overline{x^1} & v_2 \otimes \overline{x^1} & \hdots & v_r \otimes \overline{x^1} \\
v_1 \otimes \overline{x^2} & v_2 \otimes \overline{x^2} & \hdots & v_r \otimes \overline{x^2} \\
\vdots & \vdots & \ddots & \vdots \\
v_1 \otimes \overline{x^r} & v_2 \otimes \overline{x^r} & \hdots & v_r \otimes \overline{x^r} 
\end{bmatrix}.
\end{align*}
Note that each row $i$ captures 0 and each column $j$ lies on the $d$ dimensional affine space $U_j$, as defined in Section \ref{prelim}. We call the above matrix an $r$-block. We denote a \textit{colorful choice} of an $r$-block to be a subset of $r$ points in the $r$-block with exactly one point of each row and one point of each column. A \textit{colorful choice} of a collection of $r$-blocks is a subset of points such that, when restricted to each $r$-block, is a colorful choice of the $r$-block.

A colorful choice on a family of $r$-blocks induces a colorful partition in the sets that generated such $r$-blocks, hence why we use the same adjective for both. By Remark \ref{cara-tver}, a colorful choice of a family of $r$-blocks captures the zero vector in $\rr^n$ if its induced partition in $\rr^d$ is a Tverberg partition.

Sober\'on's previous approach \cite{Soberon:2018gn, Soberon:2019hm} takes a random colorful choice over $N$ different $r$-blocks and studies the expected number of colors in a given a half space in $\rr^n$. Then, one can use tail concentration bounds to show that the probability of a colorful partition tolerant to the removal of $t$ colors is non zero.  We show how the geometric conditions on our sets are amenable to these methods.

\begin{proof}[Proof of Theorem \ref{theorem:geometric-characterization}]
	First, let us show the existence of a partition with high tolerance.  Let $n=(r-1)(d+1)$.  For each color color class in $\rr^d$, construct an associated $r$-block of points in $\rr^n$.  We claim that there exists a colorful choice such that every closed half space contains points from at least $t+1$ different $r$-blocks.  This would conclude the proof, as the colorful choice would capture the origin, even after the removal of any $t$ of the $r$-blocks.  We will make this colorful choice randomly.
	
	Let $H$ be a half space through the origin in $\rr^n$ and $B$ an $r$-block and $X$ be its corresponding color class in $\rr^d$. We introduce the random variable $x_B$.  We choose uniformly at random a colorful choice $X_B$ from the elements of $B$. The union of $X_B$ over all $r$-blocks makes a colorful choice for all color classes.  Let $x_B = \chi(X_B \cap H \not = \emptyset)$, the indicator random variable for whether any points of the colorful choice of the colorful choice $X_B$ are in half space $H$. 
	
	Notice that $H^c$, the complement of $H$, is an open half space in $\rr^n$ not containing the zero vector.  By the methods of Section \ref{sec:prelim}, the half spaces $f_1(H^c \cap U_1), \ldots, f_r (H^c \cap U_r)$ in $\rr^d$ do not intersect.
	
	\begin{claim}
		If the half spaces $f_1(H^c \cap U_1), \ldots, f_r (H^c \cap U_r)$ cannot split color $X$, then $\mathbb{E}(x_b) = 1$.
	\end{claim}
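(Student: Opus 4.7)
The plan is to observe that $x_B$ is a $\{0,1\}$-valued random variable, so showing $\mathbb{E}(x_B) = 1$ is equivalent to showing that every colorful choice of $B$ has at least one of its $r$ points in $H$. I would argue the contrapositive: if some colorful choice lies entirely in $H^c$, then the half-spaces $H_j := f_j(H^c \cap U_j)$ for $j = 1, \ldots, r$ can split $X$.

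First I would parametrize colorful choices of the $r$-block $B$. A colorful choice picks exactly one entry from each row and column of $B$, which corresponds uniquely to a permutation $\sigma \in S_r$, and the chosen points are $\{v_{\sigma(i)} \otimes \overline{x^i} : i = 1, \ldots, r\}$. Suppose some such colorful choice is contained in $H^c$. Then for each $i$, we have $v_{\sigma(i)} \otimes \overline{x^i} \in H^c \cap U_{\sigma(i)}$, and applying the left inverse $f_{\sigma(i)}$ yields $x^i \in H_{\sigma(i)}$.

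Next I would check that $\{H_1, \ldots, H_r\}$ satisfies both conditions in the definition of splitting the color class $X$. The empty-intersection condition is immediate from Theorem \ref{high-dim-half-spaces} applied to $H^c$, which is an open half space through the origin. For the combinatorial condition, since $\sigma$ is a permutation, for any subset $S \subseteq [r]$ of size $k$ the preimage $\sigma^{-1}(S)$ also has size $k$, and the points $\{x^i : i \in \sigma^{-1}(S)\}$ are $k$ distinct elements of $X$ contained in $\bigcup_{j \in S} H_j$. Thus the union of any $k$ of the half-spaces contains at least $k$ points of $X$, so $\{H_1, \ldots, H_r\}$ can split $X$, contradicting the hypothesis. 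Therefore no colorful choice lies entirely in $H^c$, every colorful choice meets $H$, and $x_B = 1$ deterministically.

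The argument is entirely structural and I do not expect any real obstacle; the only delicate point is making sure the indexing lines up correctly so that the permutation $\sigma$ from the colorful choice is exactly the bijection witnessing that $\{H_1, \ldots, H_r\}$ splits $X$. The empty-intersection condition is a free consequence of Theorem \ref{high-dim-half-spaces}, and the rest is just bookkeeping about rows, columns, and the factorization $f_i \circ g_i = \mathrm{id}$.
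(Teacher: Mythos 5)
Your proof is correct and follows essentially the same route as the paper: argue the contrapositive, so that if some colorful choice $X_B$ lies entirely in $H^c$ then the half-spaces $f_j(H^c \cap U_j)$ split $X$. You supply more detail in explicitly verifying both conditions of the splitting definition (the empty-intersection property via Theorem~\ref{high-dim-half-spaces} and the Hall-type counting via the permutation $\sigma$), which the paper leaves as a one-line assertion.
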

	
	\textbf{Proof of claim.} The only way for the expectation to be strictly smaller than $1$ is if there exists a colorful choice $X_B$ such that $X_B \subset H^c$.  Therefore, $H^c \cap U_i$ would contain the the point in the $i$-th column of $X_B$.  This implies that the half spaces $f_1(H^c \cap U_1), \ldots, f_r (H^c \cap U_r)$ can split color $X$.
	
	Therefore, for at least $N-f(N)$ of the $r$-blocks $B$, the expectation of $x_B$ is $1$.  For the rest of the $r$-blocks, we use the lower bound found in Sober\'on's paper \cite{Soberon:2018gn}:
\[ \mathbb{E}(x_B) \geq p_r. \]

Thus, if we sum over all the colors, we get
\[ \sum_B \mathbb{E}(x_B) \geq (N - f(N)) + p_rf(N) = N - (1 - p_r)f(N). \]
At least $N-f(N)$ of the random variables have a fixed value of one.  The rest are independent and all have range in $[0,1]$, so we can apply Hoeffding's inequality to them \cite{Hoe63}.  We obtain 
\[ \mathbb{P}\left( \sum_B x_B \leq N - (1 - p_r)f(N) - \lambda \right) \leq \exp\left(-\frac{2\lambda^2}{f(N)} \right). \]
Set $\lambda > \sqrt{(nf(N)\ln(Nr^2))/2}$. We say that a half space is \textit{bad} if it contains points from fewer than $N - f(N) - \lambda$ different colors. If a half space is bad, the removal of those color classes it contains would separate the convex hull of the colorful choice over all color classes from the origin.   By Remark \ref{cara-tver}, this means the corresponding partition is no longer a Tverberg partition.  Even though there is an infinite number of half spaces $H$ to check, we only need to consider a finite subset of them.  We only distinguish two half spaces if they contain different subsets of the $Nr^2$ points of the $r$-blocks.  We may also assume without loss of generality that the half spaces go through the origin.  By duality, the number of different half spaces is the number of cells in a hyperplane arrangement of $Nr^2$ hyperplanes through the origin in $\rr^n$, which is known to be bounded above by $(Nr^2)^n$.  There are $(Nr^2)^n$ or fewer half spaces to check, so the probability at least one is bad is at most
\[ (Nr^2)^n \exp\left(-\frac{2\lambda^2}{f(N)} \right) < 1. \]
Therefore, there exists a colorful choice for which no hyperplane is bad. This induces the colorful partition in $\rr^d$ we were seeking.

Now, let us show that any partition fails to be Tverberg if we remove enough color classes. The process is similar to that followed in Section \ref{sec:simplebound}. By the definition of $f(N)$, there exist $r$ half spaces that can split $f(N)$ color classes.  We randomly label the half spaces as $H_1, \ldots, H_r$.  For any colorful partition $A_1, \ldots, A_r$ into $r$ parts, we first remove the $N-f(N)$ classes that $H_1, \ldots, H_r$ cannot split.  Then, for each other color class $X$, we number its elements $X=\{x_1, \ldots, x_r\}$ so that $x_i \in H_i$.  From this point on, if we relabel the half spaces we relabel the color classes $X$ accordingly.  We know it is possible to find such an assignment since the half spaces can split $X$.  We remove color $X$ if for some index $i$ we have $x_i \not\in A_i$.  The probability that a color $X$ was not removed is $1/r!$.  After doing this for each color class, what is left in $A_i$ is contained in $H_i$ for $i=1,\ldots,r$. Since the $H_i$ have empty intersection, the partition is no longer a Tverberg partition.  The expected number of color classes removed by this process is $f(N)(1-1/r!)$, so there exists a labeling when at most that number of color classes were removed.  In total, we removed at most $N-f(N)/r!$ color classes.
\end{proof}

In order for Conjecture \ref{conjecture-wrong} to hold, it is sufficient and necessary for $f(N) = o(N)$.  When $r=2$, Theorem \ref{theorem:geometric-characterization} implies Theorem \ref{theorem:optimal-radon}.  For $r \neq 2$, this theorem just shows that $N-t = \Theta(f(N))$.  

\begin{remark}
We make some notes about the differences between a perfect split and a set of $N$ color classes which can be split. For $r=2$, a perfect split with $N$ color classes is equivalent to $N$ color classes which can be split. In general, a perfect split of $N$ colors implies the $N$ colors can be split, but not vice versa. 


\end{remark}


\section{Improved lower bounds on $t$ for $r > d + 1$}\label{sec:newbounds}

The upper bound on the tolerance of Theorem \ref{lower_bound} is achieved at a perfect split configuration. However, when $r > d + 1$, perfect split configurations do not exist. 

\begin{lemma}
Let $N, r, d$ be positive integers.  For $r>d+1$ there does not exist a perfect split point configuration of $N$ color classes of $r$ points each in $\rr^d$.
\end{lemma}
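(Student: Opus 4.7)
The plan is to argue by contradiction via Helly's theorem. Suppose for contradiction that a perfect split configuration exists, witnessed by open half-spaces $H_1,\ldots,H_r \subset \rr^d$ with $\bigcap_{i=1}^{r} H_i = \emptyset$. Since $N\ge 1$, fix one color class $X = \{y_1,\ldots,y_r\}$ to work with.

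The first step I would carry out is an elementary counting argument that pins down a canonical labelling of $X$ by the half-spaces. Because each $H_i$ contains exactly $r-1$ of the $r$ points of $X$, it misses exactly one point of $X$, so summing over $i$ counts the points of $X$ a total of $r$ times with multiplicity. If some $y_j$ were missed by no $H_i$, then $y_j \in \bigcap_i H_i = \emptyset$, which is impossible; hence every $y_j$ is missed by at least one $H_i$, and since the multiplicities are nonnegative integers summing to $r$ over $r$ points, each $y_j$ is missed by exactly one $H_i$. After relabelling I may assume $y_i$ is the unique element of $X$ not in $H_i$, equivalently $y_i \in H_j$ for every $j\ne i$.

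The second (and main) step is to apply Helly's theorem to the finite family of convex sets $H_1,\ldots,H_r$ in $\rr^d$. Because $\bigcap_{i=1}^{r} H_i = \emptyset$ and $r\ge d+2$, some sub-family of $d+1$ half-spaces already has empty intersection; after a further relabelling I may assume $H_1\cap\cdots\cap H_{d+1} = \emptyset$. Now the point $y_{d+2}$ (which exists precisely because $r\ge d+2$) lies in every $H_j$ with $j\ne d+2$, and in particular in $H_1\cap\cdots\cap H_{d+1}$, contradicting the emptiness of that intersection.

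I do not expect any real obstacle: the entire argument reduces to Helly's theorem once the first-paragraph counting is in place. The only subtlety worth double-checking is that Helly's theorem genuinely gives an empty $(d+1)$-fold intersection for a finite family of open half-spaces, which is immediate since open half-spaces are convex and the family is finite.
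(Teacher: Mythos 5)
Your proposal is correct and follows essentially the same approach as the paper: argue by contradiction, apply Helly's theorem to extract $d+1$ half-spaces with empty intersection, then count points of a fixed color class to reach a contradiction with $r>d+1$. The only cosmetic difference is that you first establish a full bijection between points of $X$ and the half-spaces missing them before invoking Helly, whereas the paper invokes Helly first and then observes directly that $\bigcup_{i=1}^{d+1}H_{k_i}^c$ can hold at most $d+1$ points of $X$; both versions of the counting are valid and equally short.
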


\begin{proof}
Suppose, for the sake of contradiction, that there exist open half-spaces $H_1, \hdots, H_r$ that define a perfect split of our colorful points. Recall that 
\[ \bigcap_{i = 1}^r H_i = \emptyset. \]
By Helly's theorem, there exists a set of $d + 1$ half spaces, $H_{k_1}, \hdots, H_{k_{d+1}}$ such that 
\[ \bigcap_{i = 1}^{d + 1} H_{k_i} = \emptyset. \]
Consider a particular color class $X$. Note that at most $d + 1$ points of $X$ are contained in $\bigcup_{i = 1}^{d + 1} H^{c}_{k_i}$ since each half space $H_i$ contains exactly $r - 1$ points of $X$. Therefore, there exists a point in $\bigcap_{i = 1}^{d + 1} H_{k_i} $ since $r > d + 1$. This is a contradiction. 

\end{proof}

Since the construction of Section \ref{sec:simplebound} fails in this case, it is conceivable that Theorem \ref{theorem-old-colorful} can be improved.  The rest of this section shows that such an improvement is possible.




We can construct a matrix $T$ with $r$ rows and columns. Let $n =(r-1)(d+1)$; for a given closed half space $H$ in $\rr^n$ that contains the origin, we populate matrix $T$ with entries
\[ T(i,j) = \begin{cases}
    1 & \text{if }  u_i \otimes \overline{x_j} \in H \\
    0 & \text{otherwise } 
  \end{cases}\] 

\begin{theorem}\label{new-properties}\label{theorem:new-properties}
Let $r, d$ be positive integers, $n=(r-1)(d+1)$, and $X$ be a set of $r$ points in $\rr^d$.  Let $F$ be the $r$-block in $\rr^n$ induced by $X$, and $H$ a closed half-space space in $\rr^n$ through the origin.  If we construct the matrix $T$ as above, it will must have the following two properties 
\begin{enumerate}
\item Each column $j$ has at least one non zero entry. 
\item There exists a choice of $d + 1$ rows $i_1, \hdots, i_{d + 1}$ such that each column $j$ has at least one non zero entry among $T({i_1, j}), T({i_2, j}), \hdots, T(i_{d + 1}, j)$.
\end{enumerate}
\end{theorem}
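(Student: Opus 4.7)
The plan is to prove both properties by translating the problem about $H \subset \mathbb{R}^n$ into a statement about half-spaces in $\mathbb{R}^d$ via the maps $f_i$ from Section \ref{sec:prelim}, and then applying Helly's theorem.

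First, I would address Property 1 directly from the construction of $v_1, \ldots, v_r$. Fix a column $j$. Since $\sum_{i=1}^r v_i = 0$, we have $\sum_{i=1}^r v_i \otimes \overline{x_j} = 0$, so the origin is a convex combination (with equal weights $1/r$) of the $r$ entries $v_1 \otimes \overline{x_j}, \ldots, v_r \otimes \overline{x_j}$. If all of these entries lay in $H^c$, an open half-space not containing the origin, their convex hull would also lie in $H^c$, contradicting $0 \notin H^c$. Hence at least one entry lies in $H$.

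For Property 2, the idea is to pass to the half-spaces $K_i := f_i(H \cap U_i) \subset \mathbb{R}^d$. By applying Theorem \ref{high-dim-half-spaces} to the open half-space $\operatorname{int}(H)$ (whose boundary passes through the origin), each $K_i$ is a closed half-space of $\mathbb{R}^d$, and
\[
\bigcup_{i=1}^r K_i = \mathbb{R}^d.
\]
The key dictionary is that $T(i,j)=1$ iff $v_i \otimes \overline{x_j} \in H$ iff $x_j \in K_i$, since $v_i \otimes \overline{x_j} = g_i(x_j) \in U_i$ automatically and $f_i$ is the left inverse of $g_i$.

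Next I would take complements: let $L_i := \mathbb{R}^d \setminus K_i$, an open half-space (or empty). The union condition above becomes $\bigcap_{i=1}^r L_i = \emptyset$. The open half-spaces $L_i$ are convex, so I can apply Helly's theorem in its contrapositive form: if finitely many convex sets in $\mathbb{R}^d$ have empty total intersection and their number exceeds $d$, then there is already a subfamily of size $d+1$ with empty intersection. This gives indices $i_1,\ldots,i_{d+1}$ with $\bigcap_{k} L_{i_k} = \emptyset$, equivalently $\bigcup_k K_{i_k} = \mathbb{R}^d$. Thus for every $j$, the point $x_j \in \mathbb{R}^d$ lies in some $K_{i_k}$, so $T(i_k,j)=1$, establishing Property 2.

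The step most likely to require care is verifying that Theorem \ref{high-dim-half-spaces} applies cleanly when $H$ is merely closed (not open) through the origin; I would handle this by taking the interior of $H$ as the ``open half-space through the origin'' and invoking the second displayed equality of that theorem, which gives exactly the covering $\bigcup_i K_i = \mathbb{R}^d$ needed for the Helly argument. The case $r \le d+1$ is not the interesting one (the statement is vacuous or trivial), and the main application is the regime $r > d+1$ highlighted at the start of Section \ref{sec:newbounds}.
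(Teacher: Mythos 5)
Your proof is correct and follows the same route as the paper: apply Theorem \ref{high-dim-half-spaces} to the open half-space determined by $H$, translate the matrix condition $T(i,j)=1$ into the geometric statement $x_j \in f_i(H \cap U_i)$ via the bijection $f_i \colon U_i \to \mathbb{R}^d$, and invoke Helly's theorem to select $d+1$ indices. The only cosmetic difference is that you phrase the Helly step in terms of the complementary sets $L_i = \mathbb{R}^d \setminus f_i(H \cap U_i)$ and the covering $\bigcup_i K_i = \mathbb{R}^d$, whereas the paper works directly with $H_i := f_i(H^c \cap U_i)$ and the empty intersection $\bigcap_i H_i = \emptyset$; since $L_i = H_i$ (as $f_i$ restricted to $U_i$ is a bijection), these are the same sets and the two formulations are equivalent.
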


\begin{proof}
(1) follows from the fact the each column captures the origin. To see (2), we recall that Theorem \ref{high-dim-half-spaces} asserts 
\[ \bigcap_i f_i(H^c \cap U_i)= \emptyset. \]
For ease of notation, let us denote the half space $f_i(H^c \cap U_i)$ as $H_i$ in $\rr^d$. By Helly's theorem, there exists a choice of $d + 1$ spaces $H_{i_1}, \hdots, H_{i_{d + 1}}$ such that 
\[ \bigcap_{k = 1}^{d + 1} H_{i_k} = \emptyset. \]
Therefore, rows $i_1, \hdots, i_{d + 1}$ in matrix $T$ cannot induce an empty column $j$ in $T$ or else, point $x_j$ is in $\bigcap_{k = 1}^{d + 1} H_{i_k}$. 
\end{proof}

\begin{theorem}\label{theorem:column-probability}
Let $r > d+1$ be a positive integer. Consider an $r \times r$ matrix $T$ with entries $0$ or $1$, satisfying conditions (1) and (2) form Theorem \ref{theorem:new-properties}. The probability that a random permutation $\sigma: [r] \to [r]$ satisfies that $T(i,\sigma(i)) = 1$ for at least one value of $i$ is minimized if each column of $T$ has a single non zero entry, appearing in one of the rows $i_1, \hdots i_{d +1}$ from property (2), and each of the rows $i_1, \ldots, i_{d+1}$ has either exactly $\lfloor r/(d+1) \rfloor$ or exactly $\lceil r/(d+1) \rceil$ entries equal to one.\end{theorem}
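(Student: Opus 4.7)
The plan is to reduce $T$ to a canonical minimizer, write the avoidance probability as a closed-form alternating sum in the row-sums, and then rebalance by a swap argument.

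Write $P(T)=\Pr_\sigma[\exists i\colon T(i,\sigma(i))=1]$. Monotonicity is trivial: if $T\le T'$ entrywise then $P(T)\le P(T')$. Hence, in searching for a minimizer we may delete any $1$ whose removal preserves properties (1) and (2). Fix $d+1$ rows $i_1,\ldots,i_{d+1}$ witnessing (2). First, zero out every $1$ outside these rows: (2) still holds with the same witnesses, and each column still has a $1$ among $i_1,\ldots,i_{d+1}$, so (1) persists. Then, whenever a column contains more than one $1$, delete all but one. The result is a matrix supported on the rows $i_1,\ldots,i_{d+1}$ with exactly one $1$ per column. Letting $R_k\subseteq[r]$ be the columns whose unique $1$ is in row $i_k$ and $n_k=|R_k|$, we have $\sum_k n_k=r$, and the original minimization reduces to finding the best $(n_1,\ldots,n_{d+1})$.

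For such a reduced $T$, the event $\exists i\colon T(i,\sigma(i))=1$ is exactly $\bigcup_k A_k$ with $A_k=\{\sigma(i_k)\in R_k\}$. Since the $R_k$ are pairwise disjoint, $\Pr[\bigcap_{k\in S}A_k]=\bigl(\prod_{k\in S}n_k\bigr)(r-|S|)!/r!$, and inclusion-exclusion gives
\[
1-P(T)\;=\;\frac{1}{r!}\sum_{s=0}^{d+1}(-1)^s\, e_s(n_1,\ldots,n_{d+1})\,(r-s)!,
\]
with $e_s$ the elementary symmetric polynomial of degree $s$. Minimizing $P(T)$ is thus equivalent to maximizing this right-hand side over $n_k\ge 0$ with $\sum_k n_k=r$.

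I would finish with a rebalancing swap. Suppose $a:=n_1\ge b+2$ with $b:=n_2$, and set $\hat n=(n_3,\ldots,n_{d+1})$. From the identity
\[
e_s(a,b,\hat n)\;=\;ab\,e_{s-2}(\hat n)+(a+b)\,e_{s-1}(\hat n)+e_s(\hat n),
\]
only the coefficient $ab$ changes under $(a,b)\mapsto(a-1,b+1)$, decreasing by $a-b-1>0$. A direct computation then yields
\[
r!\bigl[(1-P(a-1,b+1,\hat n))-(1-P(a,b,\hat n))\bigr]\;=\;(a-b-1)\sum_{t=0}^{d-1}(-1)^t e_t(\hat n)(r-2-t)!.
\]
The prefactor is positive, and the alternating sum equals $(r-d-1)!$ times the number of injections $[d-1]\hookrightarrow[r-2]$ avoiding a forbidden pattern with row-counts $n_3,\ldots,n_{d+1}$ (a valid subproblem because the nontrivial case $a\ge b+2$ forces $a+b\ge 2$, so the forbidden sets fit inside $r-2$ columns). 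Being a count of injections, the sum is nonnegative, so the swap weakly increases $1-P(T)$. Iterating until no two $n_k$ differ by more than $1$ lands at the balanced configuration claimed in the theorem. The only nontrivial inequality in the argument is precisely this nonnegativity of the alternating sum, and I plan to bypass any direct symmetric-function wrangling by recognizing the sum as the inclusion-exclusion count for a strictly smaller instance of the very same structure.
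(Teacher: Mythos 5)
Your argument is correct and takes a genuinely different route from the paper's, though both share the same outline: reduce $T$ to a matrix supported on the $d+1$ witness rows with one $1$ per column, then show that rebalancing the row sums cannot decrease the number of permutations $\sigma$ with $T(i,\sigma(i))=0$ for all $i$. The paper carries out the rebalancing step by a hands-on combinatorial construction: it picks two columns $a,b$ with a $1$ in the heavier row and a $0$ in the lighter, moves one of those $1$'s, and builds an explicit bijection $f\colon S_r\to S_r$ (split into three cases according to the values $\sigma(a)$ and $T(2,\sigma^{-1}(1))$) carrying the avoiding permutations of $T$ into those of the rebalanced $T'$. You instead obtain the closed form
\[
1-P(T)=\frac{1}{r!}\sum_{s=0}^{d+1}(-1)^s\,e_s(n_1,\ldots,n_{d+1})\,(r-s)!
\]
by inclusion-exclusion, isolate the effect of the transfer $(a,b)\mapsto(a-1,b+1)$ through the identity $e_s(a,b,\hat n)=ab\,e_{s-2}(\hat n)+(a+b)\,e_{s-1}(\hat n)+e_s(\hat n)$, and recognize the residual alternating sum $\sum_{t=0}^{d-1}(-1)^t e_t(\hat n)(r-2-t)!$ as $(r-d-1)!$ times the corresponding avoidance count on $r-2$ columns (well-defined since $a+b\ge2$), hence nonnegative. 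This buys you something the paper's route does not: no case analysis, an explicit functional dependence on the row-count vector, and the formula for $q(r,d)$ falls out as a byproduct rather than having to be derived separately afterward. One small slip to fix: under $(a,b)\mapsto(a-1,b+1)$ with $a\ge b+2$, the product $ab$ \emph{increases} by $a-b-1>0$ rather than decreases; the displayed difference formula you wrote is nevertheless correct in sign and magnitude, so the conclusion stands.
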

\begin{proof}
Without loss of generality, we may assume $i_1, \hdots, i_{d + 1}$ are $1, \hdots, d + 1$.  We may assume that no entry $1$ appears out of rows $1,\ldots, d+1$, or we may replace it by a zero.  It suffices to show that the number of entries equal to $1$ in rows $1$ and in row $2$ differs by at most one.  Suppose in $T$ that row 1 has $k + 1$ non zero entries and row 2 has $k - 1$ non zero entries. Note that a colorful choice of a colored block corresponds to a permutation $\sigma: [r] \rightarrow [r]$ from columns of $T$ to rows of $T$. Let $n(\sigma)$ be the number of non zero entries the permutation $\sigma$ hits in $T$. We will construct a new matrix $T'$ and a corresponding function $m(\sigma)$ such that 
\[ | \{\sigma \mid n(\sigma) = 0 \} | \leq |\{\sigma \mid m(\sigma) = 0 \} |. \] 
This implies that $T'$ has a greater probability that a random permutation does not hit any non zero entry. 

Let $a, b$ be two columns such that $T(1,a) = T(1,b) = 1$ and $T(2,a) = T(2,b) = 0$.  Let $T'$ be the matrix where the $1$ in $T(1,a)$ is moved down to row 2. We next create a bijection $f: S_r \rightarrow S_r$ such that if $n(\sigma) = 0$, then $m(f(\sigma)) = 0$. We describe $f$ with three cases of $\sigma$.
\begin{itemize}
\item If $\sigma(a) \not = 2$: $f(\sigma) = \sigma$. 
\item If $\sigma(a) = 2$ and $1 \neq T(2,\sigma^{-1}(1))$: we define $f(\sigma)(a) = 1$, $f(\sigma)(\sigma^{-1}(1)) = 2$ and $f(\sigma) = \sigma$ on the rest of its values. 
\item If $\sigma(a) = 2$ and $1 = T(2,\sigma^{-1}(1))$: Note that $\sigma(b) \neq 2$, since $\sigma(a) = 2$. Then, we define $f(\sigma)(a) = 1$, $f(\sigma)(\sigma^{-1}(1)) = \sigma(b)$, $f(\sigma)(b) = 2$, and $f(\sigma) = \sigma$ on the rest of its values. 
\end{itemize}
Therefore, the configuration $T'$ which maximizes the probability that a random permutation $\sigma$ does not hit any non zero entry has rows $i_1, \hdots i_{d +1}$ each having $k$ non zero entries. 
\end{proof}

Let us denote by $q(r,d)$ the minimal probability obtained in Theorem \ref{theorem:column-probability}.  Notice that $q(r,d) > p_r$, as the probability of hitting at least one non-zero entry in a square $\{0,1\}$ matrix with exactly one entry $1$ in each column is minimized at permutation matrices.  However, the matrices $T$ we consider for $q(r,d)$ are never permutation matrices.  

If $r$ is a multiple of $d+1$, a standard exclusion-inclusion argument shows that we can compute $q(r,d)$ with the following formula
\[
q(r,d) = \sum_{k=0}^{d+1} (-1)^{k} {{d+1}\choose{k}} \frac{\left(\frac{r}{d+1}\right)^k (r-k)!}{r!}.
\]

Just as with $p_r$, the sequence above converges as $r \to \infty$.  For $r$ not a multiple of $d+1$, one can obtain a formula as above, where some $r/(d+1)$ terms are replaced either by $\lceil r/(d+1) \rceil$ or $\lfloor r/(d+1) \rfloor$.  If we combine Theorem \ref{theorem:new-properties} and Theorem~\ref{theorem:column-probability}, we obtain the following corollary.

\begin{corollary}
Let $r > d + 1$ be positive integers, $n=(r-1)(d+1)$. Let $X$ be a set of $r$ points in $\rr^d$ and $B$ the $r$-block of points in $\rr^n$ constructed from $X$.  Let $H$ be a closed half space in $\rr^n$ that contains the origin. The probability that we have at least one point in a random colorful choice of $B$ is in $H$ is at least $q(r,d)$.
\end{corollary}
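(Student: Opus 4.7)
The plan is to observe that this corollary is an immediate synthesis of the two preceding theorems, so the work consists mainly of correctly setting up the correspondence. First I would fix the $r$-block $B$ and the half-space $H$, and build the associated $r \times r$ matrix $T$ exactly as in Theorem \ref{theorem:new-properties}, with $T(i,j) = 1$ precisely when $v_i \otimes \overline{x_j} \in H$ and $T(i,j)=0$ otherwise. A colorful choice of $B$ selects exactly one entry per row and one entry per column, so the set of colorful choices is canonically in bijection with the symmetric group $S_r$: a permutation $\sigma : [r] \to [r]$ corresponds to picking the entry in row $\sigma(j)$ of column $j$ for each $j$. A uniformly random colorful choice therefore corresponds to a uniformly random permutation.

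Under this dictionary, the event ``at least one point of the random colorful choice lies in $H$'' becomes exactly the event ``$T(\sigma(j),j) = 1$ for some $j \in [r]$,'' or equivalently the event appearing in Theorem \ref{theorem:column-probability}. Hence the probability we want to bound is precisely the probability that a uniformly random permutation hits a nonzero entry of $T$.

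Next I would apply Theorem \ref{theorem:new-properties}, which guarantees that $T$ satisfies properties (1) and (2): every column has a nonzero entry, and there exist $d+1$ rows whose restriction still hits every column. Theorem \ref{theorem:column-probability} says that among all $0/1$ matrices satisfying (1) and (2), the probability of the event above is minimized by the specific extremal configuration described there, and the minimum value is exactly $q(r,d)$ by definition. Combining these two facts yields the desired lower bound.

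There is no real obstacle here beyond being careful that ``random colorful choice'' is translated to ``uniform random permutation'' in the statement of Theorem \ref{theorem:column-probability}, and that Theorem \ref{theorem:column-probability} gives a lower bound on the probability of hitting \emph{at least one} nonzero entry (and not, say, the probability of avoiding all of them). Once the bijection with $S_r$ is in place, the corollary is immediate from the two previous theorems.
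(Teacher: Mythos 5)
Your proof is correct and takes exactly the route the paper intends: the corollary is stated as an immediate consequence of combining Theorem~\ref{theorem:new-properties} (which shows the associated $\{0,1\}$-matrix $T$ satisfies properties (1) and (2)) with Theorem~\ref{theorem:column-probability} (which shows that among all such matrices the probability of a random permutation hitting a nonzero entry is minimized at the extremal configuration whose value is, by definition, $q(r,d)$). The only wobble is a harmless index transpose: with the paper's convention $T(i,j)=1$ iff $v_i\otimes\overline{x_j}\in H$, a colorful choice that picks row $\sigma(j)$ of column $j$ in the $r$-block selects $v_j\otimes\overline{x^{\sigma(j)}}$, so the event is $T(j,\sigma(j))=1$ for some $j$, not $T(\sigma(j),j)=1$; since $\sigma$ and $\sigma^{-1}$ are equidistributed this does not affect the probability, and the event does coincide with the one in Theorem~\ref{theorem:column-probability}.
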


This is a bound on the expectation of the random variable $x_B$ as defined in Section \ref{sec:expectation-bounds}.  The exact same proof yields the following theorem, that improves Theorem~\ref{theorem-old-colorful}.

\begin{theorem}
	Let $r,d$ be fixed positive integers such that $r>d+1$, and let $N$ be a positive integer.  There is an integer $t = q(r,d) N - O(\sqrt{N \ln N})$ such that the following is true.  For any $N$ color classes of $r$ points each in $\rr^d$, there is a colorful partition of their union into $r$ sets that remains a Tverberg partition even if any $t$ color classes are removed.
\end{theorem}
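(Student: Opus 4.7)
The plan is to run exactly the probabilistic argument from the proof of Theorem~\ref{theorem:geometric-characterization}, but to replace the per-block bound $\mathbb{E}(x_B)\ge p_r$ from \cite{Soberon:2018gn} with the stronger bound $\mathbb{E}(x_B)\ge q(r,d)$ supplied by the Corollary immediately above. Set $n=(r-1)(d+1)$ and apply Sarkaria's tensor construction to the given color classes $X_1,\ldots, X_N \subset \rr^d$ to produce $N$ $r$-blocks $B_1,\ldots, B_N$ of points in $\rr^n$. For each block $B$, pick a colorful choice $X_B$ independently and uniformly at random; jointly these determine a random colorful partition of the $Nr$ points in $\rr^d$. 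By Remark~\ref{cara-tver}, removing a set $S$ of color classes leaves a Tverberg partition exactly when $\bigcup_{B\notin S} X_B$ still captures the origin in $\rr^n$. Equivalently, the tolerance of the induced partition is one less than the minimum, over closed half spaces $H$ through the origin in $\rr^n$, of the number of blocks $B$ whose chosen point lies in $H$.

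Fix such a half space $H$ and set $x_B=\chi(X_B\cap H\neq\emptyset)$. The Corollary just proven gives $\mathbb{E}(x_B)\ge q(r,d)$ for every block $B$, and so $\sum_B \mathbb{E}(x_B)\ge q(r,d)N$. Since the $x_B$ are independent with range in $[0,1]$, Hoeffding's inequality yields
\[
\mathbb{P}\left(\sum_B x_B \le q(r,d)N - \lambda\right)\le \exp\left(-\frac{2\lambda^2}{N}\right).
\]
Call $H$ \emph{bad} if fewer than $q(r,d)N-\lambda$ blocks are hit by the random colorful choice; a partition for which no half space is bad has tolerance at least $t = q(r,d)N - \lambda - 1$.

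To take a union bound over half spaces, note that two half spaces through the origin need only be distinguished if they split the $Nr^2$ total points of the $r$-blocks differently, and by the same central-arrangement duality used in the proof of Theorem~\ref{theorem:geometric-characterization}, the number of such combinatorially distinct half spaces is at most $(Nr^2)^n$. Choosing $\lambda = C\sqrt{N\ln N}$ for a constant $C=C(r,d)$ large enough makes $(Nr^2)^n\exp(-2\lambda^2/N)<1$, so with positive probability no half space is bad, and the desired colorful partition with tolerance $t=q(r,d)N-O(\sqrt{N\ln N})$ exists. The only obstacle is routine bookkeeping: one must verify that the improved per-block expectation $q(r,d)$ in place of $p_r$ flows through the Hoeffding--union-bound sandwich without incident, which it does because both ingredients use only independence and the $[0,1]$-boundedness of the indicators.
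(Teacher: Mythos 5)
Your proposal is correct and is precisely the argument the paper has in mind: the paper states that ``the exact same proof'' (the Hoeffding--union-bound argument over half spaces, as in Theorem~\ref{theorem:geometric-characterization} and \cite{Soberon:2018gn}) goes through once the per-block expectation bound $\mathbb{E}(x_B)\ge p_r$ is replaced by $\mathbb{E}(x_B)\ge q(r,d)$ from the preceding corollary. You have simply written out the bookkeeping that the paper leaves implicit, and every step checks out.
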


In the same manner, for $r>d+1$ we can prove an analogous version of Theorem \ref{theorem:geometric-characterization} where the term $p_r$ is replaced by $q(r,d)$.

\begin{theorem}
	Let $N, r, d$ be positive integers such that $r>d+1$. Suppose we are given $N$ color classes of $r$ points of $\rr^d$ each.   Let $f(N)$ be the largest number of color classes that a family $H_1, \ldots, H_r$ of half spaces in $\rr^d$ with empty intersection can split.  Then:
	\begin{itemize}
		\item There exists a colorful partition of the $Nr$ points such that it remains a Tverberg partition even if any $t$ color classes are removed, for any
		 \[
		 t \le N - (1-q(r,d))f(N) - \sqrt{\frac{(d+1)(r-1) f(N) \ln (Nr^2)}{2}}-1.
		 \]
		 \item Every colorful partition of the points fails to be a Tverberg partition after the removal of at most
		 \[
		 N-\left(\frac{1}{r!} \right)f(N)
		 \]
		 color classes.
	\end{itemize}
\end{theorem}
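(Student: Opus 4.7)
The plan is to recycle the proof of Theorem \ref{theorem:geometric-characterization} almost verbatim, swapping the estimate $\mathbb{E}(x_B)\ge p_r$ from \cite{Soberon:2018gn} for the sharper $\mathbb{E}(x_B)\ge q(r,d)$ supplied by Theorems \ref{theorem:new-properties} and \ref{theorem:column-probability}. Set $n=(r-1)(d+1)$ and convert each of the $N$ color classes to an $r$-block in $\rr^n$ via the Sarkaria tensor construction. I would pick a colorful selection $X_B$ from each $r$-block $B$ uniformly at random and independently across blocks. For a closed half-space $H$ of $\rr^n$ through the origin, set $x_B=\chi(X_B\cap H\neq\emptyset)$.

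As in the proof of Theorem \ref{theorem:geometric-characterization}, I would split $\mathbb{E}(x_B)$ into two cases. If the half spaces $f_1(H^c\cap U_1),\ldots,f_r(H^c\cap U_r)$ in $\rr^d$ cannot split the color class associated with $B$, then $\mathbb{E}(x_B)=1$ by the same claim appearing there. Otherwise, the matrix $T(i,j)=\chi(v_i\otimes\overline{x_j}\in H)$ satisfies properties (1) and (2) of Theorem \ref{theorem:new-properties}: property (1) because every column of an $r$-block captures the origin and $H$ is closed, and property (2) via the Helly-theorem argument inside the proof of Theorem \ref{theorem:new-properties} applied to $f_1(H^c\cap U_1),\ldots,f_r(H^c\cap U_r)$, whose intersection is empty by Theorem \ref{high-dim-half-spaces}. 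Theorem \ref{theorem:column-probability} then delivers $\mathbb{E}(x_B)\ge q(r,d)$. Since the first case covers all but at most $f(N)$ blocks,
\[
\sum_B \mathbb{E}(x_B)\ \ge\ (N-f(N))+q(r,d)\,f(N)\ =\ N-(1-q(r,d))f(N).
\]

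The remainder of the first bullet is mechanical. The variables $x_B$ in the second case are independent and valued in $[0,1]$, while the others are deterministically $1$, so Hoeffding's inequality applied to the non-deterministic summands with threshold $\lambda>\sqrt{n\,f(N)\ln(Nr^2)/2}$, followed by a union bound over the at most $(Nr^2)^n$ combinatorially distinct half-spaces through the origin in $\rr^n$, yields positive probability that no half-space is bad, exactly as in Theorem \ref{theorem:geometric-characterization}. By Remark \ref{cara-tver}, the resulting colorful choice induces a colorful partition in $\rr^d$ that remains Tverberg after the removal of any $t\le N-(1-q(r,d))f(N)-\lambda-1$ color classes. For the second bullet, the randomized labeling argument from the tail of the proof of Theorem \ref{theorem:geometric-characterization} never touches $p_r$ or $q(r,d)$; only the factor $1/r!$, arising from a uniformly random labeling of $H_1,\ldots,H_r$ matching a fixed labeling of a splittable color class, appears. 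Thus the bound $N-f(N)/r!$ transfers verbatim.

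The main obstacle is conceptual rather than technical: one must confirm that Theorems \ref{theorem:new-properties} and \ref{theorem:column-probability} apply uniformly to every $r$-block whose half spaces can split, regardless of the geometry of the underlying color class. This is exactly what Theorem \ref{theorem:new-properties} provides, using only that $H$ is closed and passes through the origin together with Helly's theorem in $\rr^d$. With this in place, the upgrade from $p_r$ to $q(r,d)$ is a drop-in replacement in the probabilistic step, and no new machinery is needed.
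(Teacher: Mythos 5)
Your proof is correct and is exactly the approach the paper has in mind: the paper states this result without proof, remarking only that it follows ``in the same manner'' as Theorem \ref{theorem:geometric-characterization} with $p_r$ replaced by $q(r,d)$, and you carry out precisely that substitution, invoking Theorem \ref{theorem:new-properties} and Theorem \ref{theorem:column-probability} (equivalently, the stated corollary) to obtain $\mathbb{E}(x_B)\ge q(r,d)$ for split-capable blocks while leaving the Hoeffding/union-bound step and the second-bullet $1/r!$ argument unchanged. The only blemish is the throwaway phrase ``every column of an $r$-block captures the origin'' --- it is the rows of the $r$-block (equivalently, the columns of $T$) that capture the origin --- but this is a labeling slip with no effect on the argument.
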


Unlike in the case of $r \leq d + 1$, we do not have an asymptotically matching upper bound on the colorful tolerance. 
\begin{question}
Let $N, r, d$ be positive integers such that $r>d+1$. Does there exist $N$ color classes of $r$ points each in $\rr^d$ such that for any Tverberg partition of the points, there exists a choice of $q(r,d)N$ color classes one can remove to break the Tverberg partition? 
\end{question}



\newcommand{\etalchar}[1]{$^{#1}$}
\providecommand{\bysame}{\leavevmode\hbox to3em{\hrulefill}\thinspace}
\providecommand{\MR}{\relax\ifhmode\unskip\space\fi MR }
\providecommand{\MRhref}[2]{%
  \href{http://www.ams.org/mathscinet-getitem?mr=#1}{#2}
}
\providecommand{\href}[2]{#2}

\end{document}